\newtheorem{thm}{Theorem}[section]
\newtheorem{lem}[thm]{Lemma}
\newtheorem{dfn}[thm]{Definition}
\newtheorem{rk}[thm]{Remark}
\newtheorem*{con}{Conjecture}
\newtheorem*{thma}{Theorem A}
\newtheorem*{claim}{Claim}
\title{The local $C^1$-density of stable ergodicity}
\author{Yunhua Zhou }
\begin{document}

\date{}

\maketitle

\footnote{Date: \today.}
\footnote{Mathematics Subject Classification (2000): 37D30, 37C40, 37A25.}
\footnote{The  author was supported   by NSFC  (11001284) and Natural Science Foundation Project of CQCSTC.}
\footnote{Address:
Department of Mathematics, Chongqing University, Chongqing, 400030,  P. R. China.}
\footnote{{\em E-mail:} zhouyh@cqu.edu.cn}
\vspace{-1.3cm}

\begin{abstract}
The center bundle of a conservative partially hyperbolic diffeomorphism $f$ is called robustly non-hyperbolic if any conservative diffeomorphism which is $C^1$-close to $f$ has non-hyperbolic center bundle.  In this paper, we prove that stable ergodicity is $C^1$-dense among conservative partially hyperbolic systems with robust non-hyperbolic center.

{\bf Keywords:} partial hyperbolicity; stable ergodicity; Lyapunov exponents; blender

\end{abstract}

\section {Introduction}

Let $M$  be a smooth compact, connected and boundless Riemannian
manifold with dimension $d\geq 3$, and $\mu$ be a smooth volume
measure on $M$ with $\mu(M)=1$. Denote by $\text{Diff}^r_\mu(M)$ the set of
$C^r$   $\mu$-preserving
diffeomorphisms of $M$ endowed  with
$C^r$ topology for $r\geq 1$. If $f\in \text{Diff}^r_\mu(M)$, we also call $f$ is a conservative system.

A diffeomorphism $f:M\to M$ is said to be {\em partially hyperbolic}, if $f$
admits a non-trivial $Df$-invariant splitting of the tangent bundle
$TM=E^s\oplus E^c\oplus E^u$ and numbers $0<\alpha_s<\alpha'_c\leq \alpha''_c<\alpha_u$
 such that $\alpha_s<1<\alpha_u$ and for any $x\in M$, we have
$$\|Df|_{E^s(x)}\|<\alpha_s,\ \alpha'_c\leq  m(Df|_{E^c(x)}),\ \|Df|_{E^c(x)}\|\leq \alpha''_c {\text{ and }} \alpha_u<m(Df|_{E^u(x)}),$$
where $m(Df|_{E})$ is the minimum norm of $Df|_{E}$, i.e.,
 $$m(Df|_{E})= \inf\{\|Dfv\|: v \in E, \|v\|=1\}.$$

The subbundles $E^u, E^c$ and $ E^s$ are called {\em unstable,
center} and {\em stable bundle}. Set $\beta=\dim (E^\beta)$ for $\beta=s,c,u$. Partial hyperbolicity is a robust property. That is to say, for any given partially hyperbolic diffeomorphism  $f$ of $M$, there is  a $C^1$ neighborhood $\mathcal{U}$ of $f$ in $\text{Diff}^1(M)$ such that any $g\in \mathcal{U}$ is partially hyperbolic.
 We denote by $\text{PH}^r_\mu(M)$ the family of $C^r$
conservative partially hyperbolic diffeomorphisms of $M$ endowed with
$C^r$ topology for $r\geq 1$.
 Given $f\in \text{PH}^1_\mu(M)$, the center bundle $E^c_f$ of $f$ is called {\em robustly non-hyperbolic} if there is a $C^1$ neighborhood $\mathcal{U}$ of $f$ in $\text{PH}^1_\mu(M)$ such that each $g\in \mathcal{U}$ has two ergodic measures $\mu_1$ and $\mu_2$ satisfy $\lambda^+_{\mu_1}\leq 0$ and  $\lambda^-_{\mu_2}\geq 0$, where $\lambda^+_{\mu_1}$ and  $\lambda^-_{\mu_2}$ are the largest and smallest Lyapunov exponents of $\mu_1$ and $\mu_2$ in $E^c_g$.

We set $$\mathcal{P}=\{f\in \text{PH}^1_\mu(M):\ E^c_f \text{ is robustly non-hyperbolic}\}.$$
Then $\mathcal P$ is a non-empty open subset of $\text{PH}^1_\mu(M)$. The openness is obvious by the definition. On the other hand, if
a conservative partially hyperbolic system $f$ have two hyperbolic periodic points with indices $s$ and $s+c$ respectively, then
$f\in \mathcal P$. This implies that $\mathcal P$ is non-empty.

A diffeomorphism $f\in \text{Diff}^1_\mu(M)$ is {\em ergodic} (with respect to $\mu$)  if only full or null $\mu$-volume sets are invariant
under it. $f$
 is  {\it stably ergodic} if  there exists a $C^1$ open neighborhood $\mathcal U$ of $f$ in
$\text{Diff}^1_\mu(M)$ such that any diffeomorphism $g\in \mathcal U\cap \text{Diff}^2_\mu(M)$ is
ergodic with respect to $\mu$.

The main result of this paper is
\begin{thma}\label{thm}
There is a subset $\mathcal{D}$ of $\mathcal{P}$ such that $\mathcal{D}$ is $C^1$-dense in $\mathcal {P}$ and each $f\in \mathcal{D}$ is stable ergodic.
\end{thma}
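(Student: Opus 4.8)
The plan is to exhibit $\mathcal{D}$ as the set of diffeomorphisms obtained from points of $\mathcal{P}$ by an arbitrarily $C^1$-small perturbation making them \emph{stably accessible} and center bunched, and then to upgrade stable accessibility to stable ergodicity through a known criterion. The backbone is the Burns--Wilkinson theorem: a $C^2$ conservative partially hyperbolic diffeomorphism that is essentially accessible and center bunched is ergodic. Since accessibility and center bunching are $C^1$-open conditions, if I can produce, $C^1$-close to a given $f\in\mathcal P$, a diffeomorphism $g$ that is simultaneously \emph{robustly} accessible and center bunched, then every $C^2$ system in a $C^1$-neighborhood of $g$ is ergodic, i.e. $g$ is stably ergodic. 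The whole problem therefore reduces to producing such $g$ densely in $\mathcal{P}$, and this is exactly where the robust non-hyperbolicity of the center is exploited: it supplies the two measures $\mu_1,\mu_2$ needed to manufacture robust accessibility.

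First I would convert the measure-theoretic data in the definition of $\mathcal{P}$ into periodic data. Given $f\in\mathcal{P}$ with ergodic measures satisfying $\lambda^+_{\mu_1}\le 0$ and $\lambda^-_{\mu_2}\ge 0$, Ma\~n\'e's ergodic closing lemma combined with the Lyapunov-exponent perturbation techniques of Bochi--Viana and Baraviera--Bonatti produces, after an arbitrarily $C^1$-small perturbation, hyperbolic periodic orbits $P_1,P_2$ whose center exponents inherit the signs of $\mu_1,\mu_2$. Concretely $P_1$ can be arranged to have stable index $s+c$ (its whole center direction weakly contracting) and $P_2$ stable index $s$ (its whole center direction weakly expanding), so that $P_1$ and $P_2$ have \emph{different} stable indices, the index gap being realized inside $E^c$. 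Because the construction stays inside the open set $\mathcal{P}$, the coexistence of opposite-index periodic orbits persists robustly.

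Next I would link $P_1$ and $P_2$. Since conservative systems are nonwandering everywhere, the connecting lemma of Hayashi (in the Bonatti--Crovisier conservative form) lets me create, by a further $C^1$-small perturbation, a heterodimensional cycle between $P_1$ and $P_2$, and then a \emph{blender} in the sense of Bonatti--D\'iaz. The blender is the robust mechanism I need: it forces the strong unstable (respectively strong stable) lamination to be robustly \emph{minimal}, so that the accessibility class of every point is open and dense, making $g$ stably accessible on a $C^1$-open neighborhood. This is the step I expect to be the main obstacle, because one must situate the blender so that its activating region genuinely meets both strong laminations and so that robust \emph{minimality}—not merely robust transitivity—is obtained; controlling the geometry of the invariant laminations across the perturbation is the technical heart of the argument.

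Finally, with stable accessibility secured I would invoke the ergodicity criterion. When $\dim E^c=1$ center bunching is automatic and stable ergodicity follows at once. For higher-dimensional center the center-bunching inequality must be arranged as well; I would incorporate it into the choice of perturbation (keeping the center dynamics sufficiently close to conformal along the way) or, failing that, appeal to a refined criterion in the spirit of Rodr\'iguez Hertz--Rodr\'iguez Hertz--Tahzibi--Ures that weakens the bunching hypothesis. In either case Burns--Wilkinson then applies to every $C^2$ diffeomorphism near $g$, so $g$ is stably ergodic; collecting these $g$ over all $f\in\mathcal{P}$ yields the $C^1$-dense set $\mathcal{D}\subset\mathcal{P}$ of stably ergodic diffeomorphisms claimed in the theorem.
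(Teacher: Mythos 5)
Your backbone---Burns--Wilkinson plus a perturbation achieving center bunching---fails at the center-bunching step, and this failure is exactly why the paper does not use \cite{BW}. Center bunching is a pointwise inequality on $Df$ comparing the strong contraction/expansion rates with the non-conformality of $Df|_{E^c}$; it is $C^1$-open but it is \emph{not} $C^1$-dense, and no $C^1$-small perturbation can create it: a small perturbation moves $Df$ only slightly, so a system whose center derivative is robustly far from conformal remains non-bunched after any such perturbation. Since nothing in the definition of $\mathcal{P}$ constrains the conformality of $Df|_{E^c}$ (and $\dim E^c$ can be arbitrary), $\mathcal{P}$ contains systems that are robustly non-bunched, and on those your scheme cannot produce the required $g$. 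Your fallback sentence (``appeal to a refined criterion in the spirit of Rodr\'iguez Hertz--Rodr\'iguez Hertz--Tahzibi--Ures'') is not a repair but is in fact the entire content of the paper's proof, and that criterion is not a weakened bunching hypothesis: Lemma~\ref{thm:hhtu} requires showing that the sets $\Lambda^u(P)$, $\Lambda^s(P)$ of regular points whose Pesin unstable/stable manifolds transversely meet $W^s(P)$, $W^u(P)$ have positive measure (and jointly full measure). Verifying this is the real work, and it needs machinery absent from your proposal: (i) a generic Lyapunov-exponent analysis (equality of exponents inside each bundle of the robust finest dominated splitting via \cite{BV}, continuity points of the Lyapunov map, and a case distinction handled by \cite{BDP} when the extreme integrated center exponent already has a sign) to produce the positive-measure sets $\Lambda^{\pm}$ of points with nonzero center exponents; and (ii) a \emph{chain} of $cu$-blenders of every intermediate index $s,\dots,s+c-1$, built from the dense saddles of all intermediate indices given by Lemma~\ref{lem:1}, through which the Pesin unstable manifold of a.e.\ point of $\Lambda^+$ is pushed, via the $\lambda$-lemma and the blender property, onto the stable manifold of a periodic point of the correct index. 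Accessibility alone cannot substitute for any of this.

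There is also a secondary gap in your accessibility step. The implication ``blender $\Rightarrow$ robust minimality of the strong unstable lamination $\Rightarrow$ accessibility classes open and dense'' is not sound as stated: even granting minimality (which itself needs substantially more than the existence of a blender), minimality makes accessibility classes \emph{dense} but not \emph{open}, and openness is what accessibility arguments require. In any case this detour is unnecessary: Dolgopyat--Wilkinson \cite{DW} (Lemma~\ref{thm:DW}) already provides a $C^1$-open and dense set of stably accessible systems in $\text{PH}^r_\mu(M)$, and that is what the paper invokes. In the paper, blenders are not used for accessibility at all; accessibility enters instead through Lemma~\ref{lem:3}, guaranteeing that almost every orbit is dense, which is precisely what sends a.e.\ point of $\Lambda^+$ into the activating region $\text{Bl}^u(P)$ of the blender so that the criterion of Lemma~\ref{thm:hhtu} can be applied.
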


The study of stable ergodicity has a long-time history. In \cite{A,AS},  by using
 Hopf Argument (\cite{HO}),
D. Anosov and J. Sinai  established ergodicity of all
$C^2$ volume-preserving uniformly hyperbolic systems (Anosov
systems), including geodesic flows for compact manifolds of negative
sectional curvature.
 In 1994, M. Grayson, C. Pugh and M. Shub
(\cite{GPS}) gave the first nonuniformly hyperbolic example of a
stably ergodic system. These systems are partially hyperbolic.
Following this direction, Pugh and Shub believe  that a little
hyperbolicity goes a long way in guaranteeing ergodicity and,in
\cite{PS,PS2}, they posed the following Stable Ergodicity
Conjecture:

\begin{con}
 Stable ergodicity is $C^r$-dense among conservative partially hyperbolic diffeomorphisms.
 \end{con}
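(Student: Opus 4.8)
The plan is to follow the Pugh--Shub program, which reduces the problem of ergodicity to the more tractable geometric notion of accessibility. Recall that $f$ is \emph{accessible} if any two points of $M$ can be joined by a path consisting of finitely many arcs, each contained in a single stable or unstable leaf, and \emph{essentially accessible} if the accessibility classes form a trivial measurable partition modulo $\mu$. The first ingredient I would invoke is the ergodicity criterion of Burns--Wilkinson (refining Grayson--Pugh--Shub and Pugh--Shub): a $C^2$ conservative partially hyperbolic diffeomorphism that is essentially accessible and \emph{center bunched} is ergodic. This is established by a Hopf-type argument that uses the absolute continuity of the stable and unstable foliations (valid in regularity $C^{1+\alpha}$) together with the accessibility structure to propagate the constancy of Birkhoff averages across Lebesgue density points of a putative invariant set. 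Since accessibility and center bunching are both $C^1$-open, the task then reduces to producing, densely in the relevant topology, diffeomorphisms that enjoy these two properties.

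The second step is the density of accessibility. The key input here is the theorem of Dolgopyat--Wilkinson that accessibility is $C^1$-open and $C^1$-dense among partially hyperbolic diffeomorphisms, with the one-dimensional-center case worked out completely (and the conjecture resolved in that setting) by Rodriguez Hertz--Rodriguez Hertz--Ures. Combining this with the Burns--Wilkinson criterion yields the statement in the $C^1$ topology: starting from an arbitrary $f$, one perturbs $C^1$-slightly to gain accessibility, verifies (or arranges) center bunching, and concludes stable ergodicity. This is precisely the mechanism behind the present paper's Theorem~A, which establishes $C^1$-density of stable ergodicity inside the open subclass $\mathcal{P}$ of systems whose center is robustly non-hyperbolic; there the non-hyperbolicity of $E^c$, encoded by the two ergodic measures with center exponents straddling $0$, is exploited through a \emph{blender} to create robust accessibility by a $C^1$-small conservative perturbation.

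The main obstacle — and the reason the statement remains a conjecture for $r\geq 2$ — is that this entire scheme is at present a genuinely $C^1$ phenomenon. The perturbations that install accessibility, whether via the Dolgopyat--Wilkinson reachability argument or via blenders as in this paper, are $C^1$-small but not $C^r$-small, so they produce a stably ergodic $g$ that is $C^1$-close, not $C^r$-close, to the given $f$; no technique is known that makes accessibility $C^r$-dense in general. A second difficulty is that the available ergodicity criterion still requires center bunching, a hypothesis that can fail when $E^c$ exhibits strong expansion or contraction, and removing it from the Hopf argument is an independent open problem. I would therefore expect the realistically attainable result to be the $C^1$ version (of which Theorem~A is the instance on $\mathcal{P}$), with the full $C^r$ conjecture hinging on a $C^r$-dense accessibility theorem together with a center-bunching-free ergodicity criterion, neither of which is currently in hand.
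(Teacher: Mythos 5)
You were right not to attempt a proof: the statement is the Pugh--Shub Stable Ergodicity Conjecture, which the paper itself presents as an open conjecture and does not prove; its actual contribution, Theorem~A, is only the $C^1$-density of stable ergodicity inside the open class $\mathcal{P}$ of systems with robustly non-hyperbolic center. Your assessment of why the full $C^r$ statement is out of reach (accessibility is only known to be densely achievable by $C^1$-small perturbations) matches the state of the art, so as a meta-level answer your proposal is sound.

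However, your description of the mechanism behind Theorem~A misrepresents the paper on two points. First, the paper deliberately does \emph{not} use the Burns--Wilkinson criterion, precisely because center bunching is unavailable in this generality; instead it uses the Rodriguez Hertz--Rodriguez Hertz--Tahzibi--Ures criterion (Lemma~3.3 here): if the sets $\Lambda^s(P)$ and $\Lambda^u(P)$ of Oseledec-regular points whose Pesin stable/unstable manifolds transversally meet the invariant manifolds of a hyperbolic periodic point $P$ both have positive measure, then $f$ is ergodic on $\Lambda(P)$. So your remark that ``the available ergodicity criterion still requires center bunching'' is exactly the limitation this paper circumvents. Second, the blenders are not used ``to create robust accessibility''---accessibility comes straight from Dolgopyat--Wilkinson, and is used only to make typical orbits dense (Lemma~2.2). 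The blenders play a different role: after perturbing (Ergodic Closing Lemma plus Franks' Lemma, exploiting the two ergodic measures with center exponents of opposite sign) to get hyperbolic periodic points of indices $s$ and $s+c$, and hence by the generic index-completeness result a dense family of saddles of every intermediate index, the paper builds a chain of $cu$-blenders of indices $s+c-1,\dots$ down to the index dictated by the sign change of the integrated Lyapunov exponents in the finest dominated splitting (arranged via Bochi--Viana and continuity points of the Lyapunov map). A dense orbit of a typical point enters the first blender, its Pesin unstable manifold is forced through the blender to meet $W^s(P_{s+c-1})$, and the $\lambda$-lemma plus homoclinic relations cascade this intersection down the chain of blenders, verifying $\mu(\Lambda^u)>0$ (and symmetrically $\mu(\Lambda^s)>0$). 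In short: your survey correctly locates the open problem, but the paper's route to its partial result replaces center bunching with the blender/Pesin-manifold machinery rather than relying on it.
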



At the same time, Pugh and Shub gave a program to deal with this
conjecture: they conjectured that stable accessibility is dense and
essential accessibility implies ergodicity among volume-preserving,
partially hyperbolic diffeomorphisms. In recent years, many advances
have been made for this conjecture (e.g. see the survey
\cite{BPSW,HHU}). For example, F. Rodriguez Hertz, M. Rodriguez
Hertz and R. Ures (\cite{HHU2}) proved that stable ergodicity is
$C^\infty$-dense among partially hyperbolic diffeomorphisms with
one-dimensional center bundle; K. Burns and A. Wilkinson (\cite{BW})
proved that essential accessibility implies ergodicity if the system
is center bunched, and C. Bonatti, C. Matheus, M. Viana, and
A.Wilkinson (\cite{BMVW}) proved the conjecture in the $C^1$
topology for one-dimensional center bundle.

As pointed in \cite{HHTU, W2}, many arguments of previous works
(such as \cite{BW} and \cite{HHU2}) seem to be hard to generalize
and have reached their limits in these directions. Recently, a new
alternate criterion to establish ergodicity be obtained by F.
Rodriguez Hertz, M. Rodriguez Hertz, A. Tahzibi and R. Ures in
\cite{HHTU3,HHTU}. Using this argument, the authors proved the Pugh
and Shub's Conjecture with two-dimensional center bundle in
$C^1$-topology.

 Highly motivated by the Stably Ergodic Conjecture, our main result (Theorem A) of this paper provides a large class of conservative
partially hyperbolic diffeomorphisms  which can be $C^1$ approximated
by stably ergodic systems. Unlike \cite{BW} or \cite{HHTU}, these
systems considered here are more general and the center dimension
is not necessarily two.

\section {Preliminaries}\label{sec:pre}

Given $f\in \text{Diff}^1_\mu(M)$, by Oseledec Theory (\cite{O}), there is a
$\mu$-full invariant set $\mathcal{O}\subset M$ such that for every $x\in
\mathcal {O}$ there exist a splitting (which is called {\em Osledec
splitting})
$$T_xM= E_1(x)\oplus \cdots\oplus  E_{ k(x)}(x)$$
and real numbers (the Lypunov exponents of $\mu$) $\chi_1(f,x)<\chi_2(f,x)<\cdots<
\chi_{k(x)}(f, x)$ satisfying $Df( E_j(x))= E_j(fx)$ and
$$\lim\limits_{n\to \pm \infty}\frac{1}{n}\ln \|Df^nv\|=\chi_j(f,x)$$
for every $v\in  E_j(x)\setminus \{0\}$ and $j=1, 2, \cdots,  k(x)$.
 In the following,  by counting multiplicity, we also rewrite the Lyapunov
exponents of $\mu$ as $$\lambda_1(f,x)\leq \lambda_2(f,x)\leq \cdots \leq \lambda_d(f,x).$$
For $i=1,2,\cdots,d$, define
$$LE_i(f)=\int_M(\lambda_1(f,x)+\cdots+\lambda_i(f,x))d\mu.$$
It is obvious that the continuous points of the {\em Lyapunov map}
$$f\in \text{Diff}^1_\mu(M)\mapsto (LE_1(f),\cdots,LE_{d-1}(f))\in \mathbb{R}^{d-1}$$
is a residual set $\mathcal{R}_0$ of $f\in \text{Diff}^1_\mu(M)$.

For $f\in \text{PH}^1_\mu(M)$, the distributions $E^u$ and $E^s$ are
integrable and their integrable manifolds form two transversal
 foliations of $M$, the {\em strongly stable} and {\em strongly unstable foliations} of $M$, which we denote by $\mathcal{W}^u$ and $\mathcal{W}^s$ respectively.
 For every $x\in M$ the leaves $\mathcal{W}^u(x)$ and $\mathcal{W}^s(x)$ of the foliations containing $x$ are smooth immersed submanifolds in $M$
 called the {\em strong unstable} and {\em strong stable global manifolds} at $x$ (see e.g. \cite{BP,HPS}).

Two points $x, y\in M$ are called {\em accessible} if there are
points $x=z_0, z_1, \cdots, z_{l-1}, z_l=y, z_i\in M$ such that
$z_i\in \mathcal{W}^u(z_{i-1})$ or $z_i\in \mathcal{W}^s(z_{i-1})$ for $i=1, \cdots, l.$
A diffeomorphism $f$ is called an {\em accessible diffeomorphism} if
it has the accessibility property, i.e.,  any pare points $x,y\in M$
are accessible. $f$ is {\em essentially accessible} if there are
$\mu$-full measure subset $M'\subset M$ such that  any pare points
$x, y\in M'$ are accessible.
 $f$ is {\em stably accessible} if there is a $C^1$
neighborhood of $f$ composed by accessible diffeomorphisms.

Accessibility is important to show the ergodicity of partially hyperbolic diffeomorphisms. In \cite{DW}, D. Dolgopyat and A. Wilkinson proved that stable accessibility is $C^1$ dense. That is

\begin{lem}\label{thm:DW}
{\em (\cite{DW})} There is a $C^1$ open and dense set $\mathcal{R}_1$ in $\text{\em PH}^r_\mu(M)$ ($r\geq 1$) such that each  $f\in \mathcal{R}^1$ is accessible.
\end{lem}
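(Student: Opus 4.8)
The plan is to establish two things: that accessibility can be produced by an arbitrarily small $C^1$ perturbation, and that once produced it is robust, so that the set $\mathcal{R}_1$ of accessible diffeomorphisms contains a $C^1$ open and dense set. The central object is the \emph{accessibility class} $AC(x)$ of a point $x$, namely the set of all points reachable from $x$ by an $su$-path (a concatenation of arcs inside strong stable and strong unstable leaves). The diffeomorphism $f$ is accessible precisely when $AC(x)=M$ for some (hence every) $x$. A standard dichotomy states that each accessibility class is either open or else the classes laminate $M$ with $C^1$ leaves tangent to $E^s\oplus E^u$; thus the whole argument reduces to creating, by a small perturbation, a single accessibility class with nonempty interior, and then connecting everything up.

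The engine of the proof is a local perturbation lemma allowing one to modify the strong unstable holonomy independently of the strong stable holonomy. Working in a foliated chart adapted to the splitting $E^s\oplus E^c\oplus E^u$ near a point $p$, one builds a small $su$-quadrilateral: starting at $x$, follow $\mathcal{W}^u$, then $\mathcal{W}^s$, then $\mathcal{W}^u$, then $\mathcal{W}^s$ back to a point close to $x$. The failure of this loop to close lies in the center direction and defines a \emph{holonomy defect} $H(x)\in E^c(x)$. The key point is to show that a $C^1$-small perturbation supported in a tiny neighborhood can be chosen so that the derivative of $x\mapsto H(x)$ becomes surjective onto $E^c$; when this happens the image of the $su$-quadrilateral map sweeps out a full-dimensional piece of $M$, so $AC(x)$ acquires nonempty interior. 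This is where partial hyperbolicity is used essentially: the hyperbolicity in $E^s$ and $E^u$ lets one transport the perturbation and iterate to amplify the defect while keeping the perturbation $C^1$-small.

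Once one accessibility class is open, I would upgrade to full accessibility by a connecting argument: since open accessibility classes are saturated by both foliations and the dynamics forces the strong leaves to recur, a further small perturbation produces $su$-paths joining any two open classes, collapsing the lamination to a single class equal to $M$. Finally, robustness (the openness of $\mathcal{R}_1$) follows because both the surjectivity of the holonomy-defect derivative and the finitely many connecting $su$-paths are open conditions in the $C^1$ topology: each persists for all $g$ in a $C^1$ neighborhood of the perturbed map, so accessibility is stable there. Intersecting these neighborhoods over a countable dense family then yields the $C^1$ open and dense set $\mathcal{R}_1$.

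The main obstacle is the insistence on the $C^1$ topology. One cannot appeal to transversality or genericity arguments that require control of higher derivatives, and the foliations $\mathcal{W}^s,\mathcal{W}^u$ are in general only H\"older continuous, so the holonomy maps are not smooth. The delicate heart of the argument is therefore the quantitative construction of a perturbation that changes one holonomy by a definite, non-degenerate amount in the center direction while remaining $C^1$-small and leaving the complementary holonomy essentially unchanged; managing the competition between the size of the perturbation, the size of the quadrilateral, and the hyperbolic expansion and contraction rates is the crux, and is precisely what the Dolgopyat--Wilkinson machinery is designed to handle.
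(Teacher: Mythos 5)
The paper offers no proof of this lemma at all: it is quoted as a known theorem of Dolgopyat and Wilkinson (\cite{DW}), so what you have written is an attempt to reconstruct the Dolgopyat--Wilkinson theorem itself. As such it is a plan rather than a proof: the step you yourself call ``the crux'' --- a $C^1$-small, localized perturbation that changes the center holonomy defect by a definite, non-degenerate amount while the invariant foliations are only H\"older and the complementary holonomy is left essentially untouched --- is precisely the content of the entire cited paper, and nothing in your sketch actually carries it out. That alone makes the proposal a gap, not a proof.

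Beyond incompleteness, three concrete points fail. First, the ``standard dichotomy'' you invoke (every accessibility class is open, or else the classes form a $C^1$ lamination tangent to $E^s\oplus E^u$) is not a standard fact: a structure theorem of this kind is known only for one-dimensional center bundle (Rodr\'{\i}guez Hertz--Rodr\'{\i}guez Hertz--Ures, \cite{HHU2}, which postdates \cite{DW}), whereas the present paper needs the lemma for center bundles of arbitrary dimension; the Dolgopyat--Wilkinson argument does not rely on any such dichotomy. Second, the lemma is stated in $\text{PH}^r_\mu(M)$, so every perturbation must preserve the volume $\mu$; your sketch never addresses the conservative constraint, and constructing localized volume-preserving perturbations with the required effect on a single holonomy is a substantial separate difficulty that \cite{DW} has to resolve. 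Third, your last sentence takes an \emph{intersection} of neighborhoods over a countable dense family, which would produce a residual (G$_\delta$) set, not an open and dense one; the correct operation is a \emph{union}: once one shows that arbitrarily $C^1$-close to any $f\in\text{PH}^r_\mu(M)$ there is a $g$ with a whole $C^1$-neighborhood of accessible maps, $\mathcal{R}_1$ is taken to be the union of all such neighborhoods, which is open and dense by construction.
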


The following lemma  can be find in \cite{BDP}.

\begin{lem}\label{lem:3}
Let $f\in \text{{\em PH}}^2_\mu(M)$ and $f$ is accessible. Then almost every orbit is dense in $M$.
\end{lem}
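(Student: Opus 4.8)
The plan is to reduce the statement to a claim about open sets and then run a Hopf-type argument built only from the strong stable and strong unstable foliations. Fix a countable basis $\{U_j\}_{j\ge 1}$ of the topology of $M$. A point $x$ has dense forward orbit if and only if $x\in\bigcap_{j\ge1}A_j$, where $A_j:=\bigcup_{n\ge0}f^{-n}(U_j)$, so it suffices to prove $\mu(A_j)=1$ for every $j$. Each $A_j$ is open and contains $U_j$; since $\mu$ is a smooth volume it has full support, whence $\mu(A_j)\ge\mu(U_j)>0$. Finally $f^{-1}(A_j)\subseteq A_j$ and $f$ preserves $\mu$, so $A_j$ is invariant modulo a null set.

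Next I would show that each invariant set $A_j$ is, modulo a null set, saturated by both $\mathcal{W}^s$ and $\mathcal{W}^u$. Because $f\in\text{PH}^2_\mu(M)$, the strong foliations $\mathcal{W}^s$ and $\mathcal{W}^u$ are absolutely continuous (this is the only place where the $C^2$ hypothesis is used; see \cite{BP}). For a continuous $\varphi$ the forward Birkhoff average $\varphi^+(x)=\lim_{N}\frac1N\sum_{n=0}^{N-1}\varphi(f^nx)$ is constant along each leaf of $\mathcal{W}^s$, since $d(f^nx,f^ny)\to0$ for $y\in\mathcal{W}^s(x)$ and $\varphi$ is uniformly continuous; symmetrically the backward average $\varphi^-$ is constant along each leaf of $\mathcal{W}^u$, and $\varphi^+=\varphi^-$ $\mu$-almost everywhere. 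Approximating $\mathbf{1}_{A_j}$ in $L^1$ by continuous functions and using the absolute continuity of the foliations to pass to the conditional measures, one concludes that $\mathbf{1}_{A_j}$ agrees a.e. with a function constant along $\mathcal{W}^s$-leaves and with one constant along $\mathcal{W}^u$-leaves; that is, $A_j$ is $\mathcal{W}^s$- and $\mathcal{W}^u$-saturated modulo zero.

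It then remains to invoke accessibility: a measurable set that is saturated modulo zero by both strong foliations, in an accessible system whose foliations are absolutely continuous, has measure $0$ or $1$. Chaining strong-stable and strong-unstable holonomies along the $su$-paths provided by accessibility, and transporting the full-measure (Lebesgue-density) condition through each holonomy — which is legitimate precisely because each holonomy is absolutely continuous — forces the conditional structure of $A_j$ to be the same over the whole manifold. Since $\mu(A_j)>0$ this yields $\mu(A_j)=1$, and intersecting over $j$ produces a full-measure set of points with dense forward orbit.

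The hard part is this last step: matching the modulo-zero saturations across accessibility classes when $\dim E^c>0$, so that the null sets on which saturation may fail do not obstruct the chaining. I would stress that only the strong foliations $\mathcal{W}^s$ and $\mathcal{W}^u$ enter here, both absolutely continuous for $f\in\text{PH}^2_\mu(M)$, so the transport can be carried out directly along $su$-paths without any center-bunching hypothesis; this is exactly the line followed in \cite{BDP}. At the topological level the same mechanism already shows that each $A_j$ is open and dense, so that $f$ is transitive; the real content of the lemma is upgrading this topological statement to a statement of full measure.
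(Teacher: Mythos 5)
Your first two steps are sound (the reduction to the open invariant sets $A_j$, and the Hopf-type conclusion that each $A_j$ is saturated \emph{modulo zero} by $\mathcal{W}^s$ and by $\mathcal{W}^u$), but the final step is a genuine gap, not a technicality to be "stressed" away. The statement you invoke --- that in an accessible system with absolutely continuous strong foliations, every set saturated mod $0$ by both strong foliations has measure $0$ or $1$ --- is not a known theorem; it is essentially the Pugh--Shub ergodicity conjecture, which remains open in general and is proved only under additional hypotheses such as center bunching (Burns--Wilkinson \cite{BW}, via julienne density points) or low-dimensional center. Indeed, if your transport step were valid, then applying it to the sets $\{\varphi^+>c\}$ (which the Hopf argument makes exactly $\mathcal{W}^s$-saturated and essentially $\mathcal{W}^u$-saturated, for every continuous $\varphi$) would prove that \emph{every} accessible $f\in\text{PH}^2_\mu(M)$ is ergodic --- vastly stronger than Lemma \ref{lem:3} and far beyond current knowledge. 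The reason the "direct transport along $su$-paths" fails is concrete: absolute continuity controls $\mu$-null sets only on $\mu$-almost every leaf. After the first leg of an $su$-path you are standing on one fixed leaf, which is itself a $\mu$-null set, and nothing controls the trace, on the leaves through the points of that leaf, of the exceptional null sets where mod-$0$ saturation fails; after two legs the null sets are completely out of control. This is precisely the obstruction that center bunching and juliennes were invented to handle, and your argument does not circumvent it. Attributing this step to \cite{BDP} is also inaccurate: that is not how \cite{BDP} argues.

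The proof the paper relies on (Lemma \ref{lem:3} is quoted from \cite{BDP}) avoids mod-$0$ saturation entirely, using exactly the feature your argument discards after its first paragraph: the sets $A_j$ (better, their full-orbit versions $\bigcup_{n\in\mathbb{Z}}f^n(U_j)$) are \emph{open} and invariant. By Poincar\'e recurrence, almost every $x\in A_j$ is forward and backward recurrent, and for such $x$ one gets \emph{exact} leaf containment: if $y\in\mathcal{W}^s(x)$, then $d(f^nx,f^ny)\to 0$, so along a recurrence sequence $f^{n_k}y\to x\in A_j$; openness forces $f^{n_k}y\in A_j$, and invariance forces $y\in A_j$. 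Hence $\mathcal{W}^s(x)\cup\mathcal{W}^u(x)\subset A_j$ exactly, not mod $0$, for a.e.\ $x\in A_j$. Absolute continuity (the only place $C^2$ enters) is then used for something much more modest than a $0$--$1$ law: to build a full-measure set of points from which each successive vertex of an $su$-path can be chosen recurrent (leaf-a.e.\ point of the relevant leaves is again such a point), while continuity of the foliations keeps the perturbed endpoint inside the target open set; chaining the recurrence argument along such a path places the endpoint in the orbit closure of the starting point, giving dense orbits a.e. Recurrence plus openness are the missing ideas; without them, your route collapses onto an open problem rather than a proof.
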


An ergodic measure $\nu$ of $f\in \text{Diff}^r(M)$ is called {\em hyperbolic} if all the Lyapunov exponents of $\nu$ is not zero. If $r>1$, for every point $x\in \mathcal{O}$, there are {\em Pesin's stable} and {\em unstable} manifolds which we denote by $W^s(x)$ and $W^u(x)$. If $f$ is also partially hyperbolic, we have $\mathcal{W}^s(x)\subset W^s(x)$ and  $\mathcal{W}^u(x)\subset W^u(x)$.

Given a diffeomorphism  $f$ and a $f$-invariant set $K\subset M$, a
$Df$-invariant splitting
 $T_xM=E(x)\oplus F(x)$
($x\in K$) of the tangent bundle over $K$ is $l$-{\em dominated} if
for any  $x\in K$ one has
$$\frac{\|D_xf^l|_{E(x)}\|}{m(D_xf^l|_{F(x)})}<\frac{1}{2}$$
and the dimension of $E(x)$ is independent of $x\in K$. We denote the domination by $E\prec_l F$ and call $\dim (E)$ the {\em index} of the domination.
A
$Df$-invariant splitting
 $T_xM=E_1(x)\oplus \cdots\oplus E_k(x)$
($x\in K$) of the tangent bundle over $K$ is $l$-{\em dominated} if
for any $i<j$, one has  $E_i\prec_l E_j$ for some $l$.

Dominated splitting is
unique, transverse  and continuous. Moreover, the dominated
splitting has some robust properties (see e.g. \cite{BDV2}).  A dominated splitting $E_1\oplus\cdots\oplus E_k$ is called the {\em finest dominated splitting} if there is no dominated splitting in each invariant bundle $E_i$ for all $i=1, \cdots, k.$ Moreover, a splitting is called the {\em robust finest dominated splitting} if the continuation of the   splitting is the finest dominated splitting of the $C^1$ perturbation diffeomorphism.

For a hyperbolic periodic point $P$ of $f$, we denote
by  $ind(P)$ the {\em index} of $P$, where the index of $P$ refers to the dimension of the stable bundle of $P$.
The {\em homoclinic class} of a hyperbolic saddle $P$
of a diffeomorphism $f$ , denoted by $H(P,f )$, is the closure of the transverse intersections
of the invariant manifolds (stable and unstable ones) of the orbit of $P$.

\begin{lem}\label{thm:connectin}
{\em (\cite{BC})} There exists a residual set $\mathcal{R}_2$ of $\text{\em Diff}^1_\mu(M)$   such that all diffeomorphisms in $\mathcal{R}_2$ is transitive.
Moreover, $M$ is the unique homoclinic class.
\end{lem}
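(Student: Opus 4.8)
The plan is to follow the strategy of Bonatti and Crovisier, whose central engine is the connecting lemma for pseudo-orbits (itself resting on Hayashi's connecting lemma) and whose organizing principle is a Baire-category argument. First I would recast transitivity as a countable intersection of open conditions. Fix a countable basis $\{U_n\}_{n\geq 1}$ of the topology of $M$, and for each ordered pair $(m,n)$ let $\mathcal{T}_{m,n}$ be the set of $f\in\text{Diff}^1_\mu(M)$ admitting some $k\geq 1$ with $f^k(U_m)\cap U_n\neq\emptyset$. Each $\mathcal{T}_{m,n}$ is plainly $C^1$-open, and any $f$ lying in every $\mathcal{T}_{m,n}$ has a dense orbit and is therefore transitive. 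Thus it suffices to exhibit a residual set contained in $\bigcap_{m,n}\mathcal{T}_{m,n}$, since that forces each open $\mathcal{T}_{m,n}$ to be dense and the (hence $G_\delta$) set of transitive maps to be residual.

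Second, I would use conservation to generate the recurrence that the connecting lemma consumes. Because $\mu$ has full support, Poincaré recurrence makes the recurrent points dense, so $\Omega(f)=M$, and in fact the chain-recurrent set satisfies $\mathcal{R}(f)=M$ for every conservative $f$. What is missing is that $M$ be chain transitive, i.e. a single chain-recurrence class; the invariant-circle examples show this can genuinely fail for special conservative maps, so genericity is unavoidable here. I would obtain chain transitivity on a residual set $\mathcal{B}$ by a merging argument: any two distinct chain classes meeting $U_m$ and $U_n$ can be linked in both directions by an arbitrarily small $C^1$-perturbation supported near chosen recurrent points, fusing them into one class; encoding this merged relation over the countable basis and intersecting yields the residual set $\mathcal{B}$ on which $M$ is a single chain-recurrence class.

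Third comes the technical heart. On a residual set $\mathcal{A}$ the connecting lemma for pseudo-orbits holds: whenever there are, for every $\varepsilon>0$, $\varepsilon$-pseudo-orbits from $U_m$ to $U_n$, an arbitrarily small $C^1$-perturbation produces a genuine orbit from $U_m$ to $U_n$. On $\mathcal{A}\cap\mathcal{B}$ the chain transitivity furnishes such pseudo-orbits for every pair, and the connecting lemma turns them into true orbits, so $\mathcal{A}\cap\mathcal{B}\subseteq\bigcap_{m,n}\mathcal{T}_{m,n}$. Since $\mathcal{A}\cap\mathcal{B}$ is residual, this both proves each $\mathcal{T}_{m,n}$ dense and delivers the transitivity assertion.

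Finally, for the homoclinic-class statement I would invoke the conservative closing lemma of Pugh and Robinson to make hyperbolic periodic points dense in $M=\Omega(f)$ on a residual set, and then apply the connecting lemma for pseudo-orbits in its standard corollary: generically the homoclinic class $H(P,f)$ of a hyperbolic periodic point $P$ coincides with its chain-recurrence class. As $M$ is a single chain class, every such $H(P,f)$ equals $M$; in particular all hyperbolic periodic points are homoclinically related and $M$ is the unique homoclinic class. Intersecting the three residual sets produces the desired $\mathcal{R}_2$. I expect the main obstacle to be precisely the connecting lemma for pseudo-orbits and its interplay with genericity: a single perturbation that forges one connection can destroy another, so no finite perturbation can realize all connections at once. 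The Baire scheme is exactly what defuses this difficulty, since each connection is encoded as the density of a separate open set $\mathcal{T}_{m,n}$, and a countable intersection then secures all connections simultaneously without requiring any one perturbation to achieve them.
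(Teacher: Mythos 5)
The paper offers no internal proof of this lemma: it is imported wholesale from Bonatti--Crovisier \cite{BC}, so your outline can only be measured against their argument, whose large-scale architecture (Baire encoding via the open sets $\mathcal{T}_{m,n}$, chain transitivity feeding a connecting lemma for pseudo-orbits, and the generic identification of homoclinic classes with chain classes) you do follow. But two of your steps are wrong as written. Your second step rests on a false premise: for a volume-preserving diffeomorphism of a compact connected manifold, chain transitivity is \emph{automatic}, not generic. A proper attractor requires a trapping region, i.e.\ an open set $U\neq\emptyset,M$ with $f(\overline{U})\subset U$; then $U\setminus f(\overline{U})$ is open and nonempty, so $\mu(U)>\mu(f(\overline{U}))=\mu(\overline{U})\geq\mu(U)$, a contradiction. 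Hence the only attractor--repeller pair is the trivial one, and Conley theory makes all of $M$ a single chain class for \emph{every} $f\in\text{Diff}^1_\mu(M)$. Your invariant-circle examples obstruct transitivity, not chain transitivity: pseudo-orbits cross invariant circles by arbitrarily small jumps. Consequently the ``merging'' construction of $\mathcal{B}$ is not only unnecessary but unsound as described: that two chain classes have been fused by a perturbation is not an open (robust) condition --- they can split again under further perturbation --- so it cannot be encoded into a Baire scheme the way density of the open sets $\mathcal{T}_{m,n}$ can. That is exactly the step which, in your sketch, would need and lacks a proof; fortunately neither Bonatti--Crovisier nor you need it.

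The more serious slip is the asserted inclusion $\mathcal{A}\cap\mathcal{B}\subseteq\bigcap_{m,n}\mathcal{T}_{m,n}$. The connecting lemma for pseudo-orbits produces a genuine orbit from $U_m$ to $U_n$ for some \emph{perturbation} $g$ of $f$, not for $f$ itself, so membership of $f$ in $\mathcal{A}\cap\mathcal{B}$ yields no orbit of $f$ at all; indeed the inclusion is false (on $T^3$ the map $(x,y,z)\mapsto(x+\alpha,y,z)$ with $\alpha$ irrational is conservative, has no periodic orbits --- hence lies in any reasonable $\mathcal{A}$ --- and is chain transitive, yet is not transitive). What your argument actually proves is that each open set $\mathcal{T}_{m,n}$ is \emph{dense}: given an arbitrary $f$, first perturb into the dense set where the connecting lemma applies, then connect. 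The residual set of the lemma is then $\bigcap_{m,n}\mathcal{T}_{m,n}$ itself (intersected with the generic sets used for the homoclinic-class statement), not $\mathcal{A}\cap\mathcal{B}$. Your closing paragraph describes precisely this corrected mechanism, so the repair is purely a matter of logic, but the central deduction as stated is invalid. The final step --- the Pugh--Robinson conservative closing lemma to get dense hyperbolic periodic points, plus the generic equality of $H(P,f)$ with the chain class of $P$, combined with the single chain class --- is the right route to uniqueness of the homoclinic class, once the first part is fixed.
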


Connecting Lemma was firstly proved by S. Hayashi (\cite{Ha}) and was extended by L. Wen and Z. Xia in \cite{WX} (see also \cite{Ar}). The following Connecting Lemma which established by C. Bonatti and S. Crovisier from the proof of Hayashi's will permit us to create intersections between stable and unstable manifolds.

\begin{lem}\label{thm:BC}
{\em (Connecting Lemma \cite{BC})} Let $Q,P$ be hyperbolic periodic points of a $C^r$ $(r\geq 1)$ transitive diffeomorphism preserving a smooth measure $\mu$. Then, there exists a $C^1$-perturbation $g\in C^r$ preserving $\mu$ such that $W^s(P)\cap W^u(Q)\neq \emptyset.$
\end{lem}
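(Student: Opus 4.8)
The plan is to deduce the statement from Hayashi's Connecting Lemma \cite{Ha,WX} (see also \cite{Ar}) in its conservative form, using transitivity only to manufacture the orbit segment that the Connecting Lemma takes as input. We may assume $P$ and $Q$ are hyperbolic saddles, so that $W^u(Q)$ and $W^s(P)$ are non-trivial immersed submanifolds; it then suffices to create a single point of $W^u(Q)\cap W^s(P)$ after a $C^1$-small, $\mu$-preserving, $C^r$ perturbation. The guiding principle is that such an intersection can be produced by a perturbation supported in a thin tube around a finite orbit segment, so that the orbits of $P$ and $Q$, their continuations, and their local manifolds are left untouched.

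First I would fix a fundamental domain $D^u\subset W^u(Q)$ and a fundamental domain $D^s\subset W^s(P)$ and pick non-periodic points $a\in D^u$ and $b\in D^s$. Since $f$ is transitive, for any neighborhoods $U\ni a$ and $V\ni b$ there is $n>0$ with $f^n(U)\cap V\neq\emptyset$, i.e. a point $w\in U$ with $f^n(w)\in V$. Letting $U$ and $V$ shrink produces $w$ arbitrarily close to $a$ whose forward orbit passes arbitrarily close to $b$, and the intermediate segment $w,f(w),\dots,f^n(w)$ is the piece along which the connection will be realized. Choosing an interior, non-periodic point $p=f^k(w)$ of this segment provides the base point at which the Connecting Lemma is localized, the non-periodicity guaranteeing that the tube $\bigcup_{i=0}^{L}f^i(B(p,\delta))$ is embedded for small $\delta$.

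Next I would invoke the Connecting Lemma: for the prescribed $C^1$-neighborhood there are an integer $L$ and nested balls about $p$ such that, once the forward orbit of a point of $D^u$ and the backward orbit of a point of $D^s$ both enter the inner ball --- which is exactly what the segment above provides after shrinking $U,V$ --- one can select $g$ equal to $f$ off the tube, $C^1$-close to $f$, and carrying the perturbed forward iterate of the relevant plaque of $W^u(Q)$ onto a plaque of $W^s(P)$. Because the stable and unstable manifolds are invariant and the perturbation meets neither $\mathcal{O}(P)$ nor $\mathcal{O}(Q)$, the continuations $W^u(Q,g)$ and $W^s(P,g)$ inherit this crossing, so $W^s(P,g)\cap W^u(Q,g)\neq\emptyset$, as required.

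The main obstacle is the conservative constraint: $g$ must lie in $C^r$ and preserve $\mu$, whereas Hayashi's classical argument yields only a $C^1$ perturbation with no volume control. To handle this I would use the volume-preserving Connecting Lemma of Bonatti and Crovisier \cite{BC}, in which the elementary plaque-pushing perturbations are realized by $C^r$, $C^1$-small, $\mu$-preserving maps supported in the tube. The delicate point is to carry out the finitely many selecting-and-pushing steps so that each individual map is exactly volume-preserving and the accumulated $C^1$-size stays inside the given neighborhood; this conservative refinement of Hayashi's pushing argument is the technical heart of the statement, and everything else is bookkeeping built on transitivity.
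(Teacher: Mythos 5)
The paper itself gives no proof of this lemma: it is quoted verbatim from \cite{BC}, so your sketch has to be measured against the argument in the literature it tries to reconstruct. Your overall plan --- use transitivity to produce an orbit segment running from a neighborhood of $W^u(Q)$ to a neighborhood of $W^s(P)$, then apply a conservative Hayashi-type connecting lemma supported in tubes that avoid the orbits of $P$ and $Q$ --- is the right shape of argument.

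However, the central step is wrong as stated. You localize the Connecting Lemma at a single interior point $p=f^k(w)$ of the transitive segment and assert that ``the forward orbit of a point of $D^u$ and the backward orbit of a point of $D^s$ both enter the inner ball'' around $p$. Transitivity gives no such thing: $w$ is merely \emph{close} to $a\in D^u$, it does not lie on $W^u(Q)$, and the forward $f$-orbit of $a$ (or of any point of $D^u$) stays inside the invariant manifold $W^u(Q)$ and has no reason whatsoever to approach $p$; symmetrically, no point of $D^s$ has backward orbit approaching $p$. What transitivity actually provides is that the orbit of $w$ --- a point on \emph{neither} manifold --- passes near $a$ and later near $b$. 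Turning this into an intersection of the two invariant manifolds requires \emph{two} connections: one localized at $a$, where the forward iterates of $W^u_{loc}(Q)$ genuinely pass (since $a\in W^u(Q)$) and the backward orbit of $f^n(w)$ enters every small ball (it contains $w$), which after a first perturbation puts $f^n(w)$ on $W^u(Q,g_1)$; and a second one localized at $b$, where the backward iterates of $W^s_{loc}(P)$ pass, which finally creates the heteroclinic point. Composing two connecting perturbations is precisely where the naive Hayashi argument breaks down --- the first perturbation can destroy the recurrence needed for the second, since the orbit of $w$ may re-enter the first tube --- and overcoming this is the actual content of the connecting lemma for pseudo-orbits of \cite{BC}. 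By contrast, the conservative constraint, which you single out as ``the technical heart,'' is the comparatively routine part: volume-preserving versions of the elementary connecting perturbations were already available in \cite{Ar} and \cite{WX}. So your sketch leans on \cite{BC} for the easy half while silently bypassing, through an unjustified hypothesis at $p$, the multi-jump difficulty that \cite{BC} was written to resolve.
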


Blender has been introduced firstly in \cite{BD}, and it is a very
useful tool to understand the dynamical properties such as the
transitivity and ergodicity (e.g. see \cite{BD,BD2,HHTU2,HHTU}).  There are several definitions of blender(\cite{BD,BDV2,LSY,HHTU2}). The following definition comes from \cite{BD2} and \cite{HHTU2}.

\begin{dfn}\label{dfn:blender}
Let $P, Q$ be hyperbolic periodic points of a diffeomorphism $f$ with index $i$ and $i+1$ respectively. We say that $f$ has a $cu$-blender of index $i$ associated to $(P,Q)$ if

(1) $P$ is a partially hyperbolic periodic point of $f$ such that $Df$ is expending on $E^c$ and $\dim (E^c)=1$;

(2) there is a small open set $\text{\em Bl}^u(P)$ such that every $(d-i)$-strip well placed in $\text{\em Bl}^u(P)$ transversely intersects $W^s(P)$;

(3) $W^u(Q)\cap \text{\em Bl}^u(P)$ contains a vertical disk $D$ through $\text{\em Bl}^u(P)$, i.e., $D$ is a $(d-i-1)$-disk which is centered at a point in $\text{\em Bl}^u(P)$, the radius of $D$ is much bigger than the radius of $\text{\em Bl}^u(P)$ and $D$ is almost tangent to $E^u$;

(4) this property is $C^1$-robust.

Define a $cs$-blender in an analogous way by concerning $f^{-1}$.
\end{dfn}

We also will use the following two lemmas.

\begin{lem}\label{thm:AB} {\em (Theorem C of \cite{AB})}
There is a residual set $\mathcal{R}_3\subset \text{\em Diff}^1_\mu(M)$ such that for every $f\in \mathcal{R}_3$, every $f$-invariant Borel set $\Lambda\subset M$, and every $\eta>0$, if $g\in \text{\em Diff}^1_\mu(M)$ is sufficiently close to $f$ then there exists a $g$-invariant Borel set $\widetilde{\Lambda}$ such that $\widetilde{\Lambda}\subset B_\eta(\Lambda)$ and $\mu(\widetilde{\Lambda}\triangle \Lambda)<\eta$, where $ B_\eta(\Lambda)=\{x\in M: \rho(x,y)<\eta \text{ for some }y\in \Lambda\}$ and $\rho$ is the distance on $M$ induced by the Riemannian metric.
\end{lem}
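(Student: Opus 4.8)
The plan is to pass to the measure algebra $(\mathfrak{M},d)$, where $\mathfrak{M}$ is the family of Borel subsets of $M$ modulo $\mu$-null sets and $d(A,B)=\mu(A\triangle B)$, a complete separable metric space. For $g\in\mathrm{Diff}^1_\mu(M)$ write $I(g)\subset\mathfrak{M}$ for the (closed) set of $g$-invariant classes; the conclusion is then essentially a \emph{generic lower semicontinuity} statement for the set-valued map $g\mapsto I(g)$, sharpened by the spatial requirement $\widetilde{\Lambda}\subset B_\eta(\Lambda)$. The observables I would feed into a Baire-category argument are, for an open set $U\subset M$, the measures of the maximal invariant sets
$$\Phi_U(g)=\mu\Big(\bigcap_{n\in\mathbb{Z}}g^{-n}U\Big).$$
Writing $\Phi_U(g)=\inf_N\mu\big(\bigcap_{|n|\le N}g^{-n}U\big)$ exhibits $\Phi_U$ as an infimum of functions that are continuous in $g$: indeed $g\mapsto g^{-n}U$ varies continuously in $\mathfrak{M}$ for each fixed $n$, since $\mathbf 1_{g^{-n}U}=U_g^{\,n}\mathbf 1_U$ and the Koopman representation $g\mapsto U_g$, $U_g\varphi=\varphi\circ g$, is continuous in the strong operator topology of $L^2(\mu)$ (uniform convergence $g_j\to f$ gives $\varphi\circ g_j\to\varphi\circ f$ for continuous $\varphi$, and then for all of $L^2$ by density and $\|U_g\|=1$). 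Hence each $\Phi_U$ is upper semicontinuous, and — exactly as for the Lyapunov map yielding $\mathcal{R}_0$ — an upper semicontinuous function on the Baire space $\mathrm{Diff}^1_\mu(M)$ is continuous on a residual set.

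I would fix a countable family $\mathcal{U}$ of open sets (finite unions of balls with rational data and null boundary) and set $\mathcal{R}_3=\bigcap_{U\in\mathcal{U}}\{g:\Phi_U\ \text{is continuous at}\ g\}$, which is residual. Given $f\in\mathcal{R}_3$, an $f$-invariant $\Lambda$ and $\eta>0$, outer regularity of $\mu$ provides an open $U$ with $\Lambda\subset U\subset B_\eta(\Lambda)$ and $\mu(U\setminus\Lambda)$ as small as desired. Since $\Lambda$ is $f$-invariant and contained in $U$, one has $\Lambda\subset\mathrm{Inv}(U,f):=\bigcap_n f^{-n}U\subset U$, so $\mathrm{Inv}(U,f)$ already approximates $\Lambda$ in $d$; the candidate continuation is $\widetilde{\Lambda}=\mathrm{Inv}(U,g)=\bigcap_n g^{-n}U$, which is $g$-invariant and automatically satisfies $\widetilde{\Lambda}\subset U\subset B_\eta(\Lambda)$. \emph{Provided $U$ can be taken in $\mathcal{U}$}, the bound on $\mu(\widetilde{\Lambda}\triangle\Lambda)$ follows by comparing $\mathrm{Inv}(U,g)$ with $\mathrm{Inv}(U,f)$ through the truncations $\mathrm{Inv}_N(U,\cdot)=\bigcap_{|n|\le N}(\cdot)^{-n}U$: for fixed $N$ these converge in $d$ as $g\to f$, while continuity of $\Phi_U$ at $f$ forces $\mu(\mathrm{Inv}(U,g))\to\mu(\mathrm{Inv}(U,f))$; choosing $N$ so that $\mathrm{Inv}_N(U,f)$ is $d$-close to $\mathrm{Inv}(U,f)$ and chaining the two facts bounds $\mu(\mathrm{Inv}(U,g)\triangle\mathrm{Inv}(U,f))$, hence $\mu(\widetilde{\Lambda}\triangle\Lambda)$, by a quantity tending to $0$ with the perturbation.

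The main obstacle is precisely the interaction between the measure approximation and the spatial localization for an \emph{arbitrary} invariant Borel $\Lambda$: a single universal countable family $\mathcal{U}$ cannot sandwich every Borel $\Lambda$ from outside by a set of nearly the same measure — for instance if $\Lambda$ is a positive-measure invariant set that is topologically dense, no finite union of balls containing $\Lambda$ has measure close to $\mu(\Lambda)$ — so the required $U$ cannot simply be read off a fixed countable list. The naive remedy of cutting a measure-approximant down to $B_\eta(\Lambda)$ also fails, because the $g$-saturation of the small overspill outside $B_\eta(\Lambda)$ can cascade to positive measure when $g$ is ergodic on the relevant piece. Overcoming this is where the force of genericity must enter: one has to show that the set of $f$ at which invariant sets can be spatially destroyed by arbitrarily small perturbations is meager, a delicate point reflecting the non-compactness of $\mathfrak{M}$ (weak limits of indicator functions need not be indicators). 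I would therefore separate two regimes — $\Lambda$ essentially $\eta$-dense, where the localization is vacuous and the measure-algebra approximation alone suffices, versus $\Lambda$ spatially concentrated, where controlled outer neighborhoods are genuinely available — and treat them independently; reconciling these two regimes within one residual set is the technical crux I expect to be hardest.
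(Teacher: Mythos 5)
First, a point of reference: the paper does not prove this lemma at all --- it is imported verbatim as Theorem~C of \cite{AB} --- so your attempt can only be judged against a correct proof, not against anything internal to the paper. Judged that way, it contains a genuine gap, and it is precisely the one you name yourself in your last paragraph. Your semicontinuity machinery is sound as far as it goes: since all maps involved preserve $\mu$, the Koopman density argument does give $\mu(g^{-n}U\triangle f^{-n}U)\to 0$ for \emph{any} fixed Borel $U$ (no boundary condition is even needed), so each $\Phi_U$ is u.s.c., generically continuous, and your truncation/chaining argument correctly yields $\mu\bigl(\mathrm{Inv}(g,U)\triangle \mathrm{Inv}(f,U)\bigr)\to 0$ at continuity points of $\Phi_U$. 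But this proves the lemma only for those invariant sets $\Lambda$ that can be sandwiched as $\Lambda\subset U\subset B_\eta(\Lambda)$ with $\mu(U\setminus\Lambda)<\eta$ by some $U$ in the \emph{fixed countable family} $\mathcal{U}$, and, as you observe, no countable family works for all invariant Borel sets (a dense invariant set of intermediate measure defeats it). Your proposed repair --- a dichotomy between ``essentially $\eta$-dense'' and ``spatially concentrated'' $\Lambda$ --- is not carried out and would not suffice as stated: an invariant set can be dense in one part of $M$ and concentrated in another, so the two regimes do not exhaust the possibilities, and nothing in the proposal reconciles them within one residual set.

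The missing idea is to \emph{decouple} the two requirements rather than ask a single $U$ to do both jobs. For the measure approximation, forget $U$ entirely and use the mean ergodic theorem: the orthogonal projection $P_g$ onto the $U_g$-invariant functions in $L^2(\mu)$ is the norm limit of the Birkhoff averages $\frac{1}{N}\sum_{n<N}U_g^n$, each continuous in $g$; hence $g\mapsto P_g\varphi$ is Baire class~$1$ for each $\varphi$, so there is a residual set on which $g\mapsto P_g$ is strongly continuous (continuity on a countable dense set of $\varphi$'s extends to all of $L^2$ because the $P_g$ are contractions). At such an $f$, for \emph{any} $f$-invariant $\Lambda$ one has $P_g\mathbf{1}_\Lambda\to P_f\mathbf{1}_\Lambda=\mathbf{1}_\Lambda$ in $L^2$, so the $g$-invariant set $\widetilde{\Lambda}_0=\{P_g\mathbf{1}_\Lambda>1/2\}$ satisfies $\mu(\widetilde{\Lambda}_0\triangle\Lambda)\to 0$ by Chebyshev --- no countable list of sets $\Lambda$ is needed. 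For the spatial localization, note that $B_\eta(\Lambda)=B_\eta(\overline{\Lambda})$ and $\overline{\Lambda}$ is compact, so one can always find $U=B_r(F)$ with $F$ a finite set of rational points and $r$ rational such that $\Lambda\subset U\subset B_\eta(\Lambda)$; crucially $\mu(U\setminus\Lambda)$ need \emph{not} be small, which is exactly what kills the dense-$\Lambda$ obstruction. These $U$ form a countable family to which your own argument applies and gives, since $\Lambda\subset\mathrm{Inv}(f,U)$, that $\mu\bigl(\Lambda\setminus\mathrm{Inv}(g,U)\bigr)\le\mu\bigl(\mathrm{Inv}(f,U)\setminus\mathrm{Inv}(g,U)\bigr)\to 0$. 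Finally take $\widetilde{\Lambda}=\widetilde{\Lambda}_0\cap\mathrm{Inv}(g,U)$: it is $g$-invariant, contained in $B_\eta(\Lambda)$, and
\begin{equation*}
\mu(\widetilde{\Lambda}\triangle\Lambda)\ \le\ \mu(\widetilde{\Lambda}_0\triangle\Lambda)+\mu\bigl(\Lambda\setminus\mathrm{Inv}(g,U)\bigr),
\end{equation*}
which is small for $g$ close to $f$. Intersecting the two residual sets produces the $\mathcal{R}_3$ of the statement; this is the reconciliation your proposal identifies as the crux but leaves open.
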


\begin{lem}\label{thm:AV}{\em (\cite{Av})}
$C^\infty$ diffeomorphisms are dense in the set of $C^1$ diffeomorphisms preserving $\mu$.
\end{lem}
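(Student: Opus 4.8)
The plan is to separate the two demands---smoothness and exact volume-preservation---and to reconcile them through a quantitative volume-correction that is coupled to the smoothing scale. First I would forget the measure and produce a purely $C^\infty$ approximation: working in a finite atlas of volume-adapted charts, together with a partition of unity and standard mollification (or the Whitney approximation theorem), I obtain a $C^\infty$ diffeomorphism $h$ with $\|h-f\|_{C^1}<\epsilon$. Because $f$ is conservative, its Jacobian relative to $\mu$ is identically $1$, and since $F\mapsto \mathrm{Jac}(F)$ is continuous from the $C^1$ topology to the $C^0$ topology, the smooth positive density $\rho:=\mathrm{Jac}(h)$ satisfies $\|\rho-1\|_{C^0}\to 0$ as $\epsilon\to 0$, while $\int_M \rho\,d\mu=\int_M d\mu$ since $h$ is a diffeomorphism.

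The goal is then to correct $h$ into an exactly conservative map without spoiling the $C^1$ estimate. Writing $g=\phi\circ h$, one computes $g^*\mu=(\mathrm{Jac}(\phi)\circ h)\,\mathrm{Jac}(h)\,\mu$, so $g$ preserves $\mu$ as soon as $\mathrm{Jac}(\phi)=\sigma$, where $\sigma:=(1/\rho)\circ h^{-1}$ is smooth, positive, $C^0$-close to $1$, and has integral $\int_M\sigma\,d\mu=\int_M d\mu$. Thus everything reduces to the \emph{prescribed-Jacobian problem}: given a smooth $\sigma$ with $\|\sigma-1\|_{C^0}$ small and the correct total mass, find a $C^\infty$ diffeomorphism $\phi$, $C^1$-close to the identity, with $\mathrm{Jac}(\phi)=\sigma$. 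Moser's theorem already provides such a $\phi$, but the standard construction---flowing along a field $X$ solving $\mathrm{div}\,X=\sigma-1$---controls $\phi$ in $C^1$ only when $\sigma-1$ is small in a H\"older norm. Here $\sigma-1$ is small merely in $C^0$: built from a $C^1$-approximation, its derivatives involve the uncontrolled second derivatives of $h$ and need not be small. This \emph{loss of one derivative} is the whole difficulty of the statement.

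To defeat the derivative loss I would couple the correction to the smoothing scale. Mollifying $f$ at scale $t$ gives $\sigma_t$ with $\eta_t:=\|\sigma_t-1\|_{C^0}\to 0$; moreover, since $Df$ is uniformly continuous on the compact $M$, the mollification error is governed by the modulus of continuity of $Df$, which yields the scale estimate $\|D\sigma_t\|_{C^0}=O(\eta_t/t)$, so the density error oscillates at scale $t$ with amplitude $\eta_t$. I would then tile the charts by cubes of side $\sim t$ and solve the prescribed-Jacobian problem \emph{locally} on each cube by an explicit, compactly supported corrector (a quantitative Dacorogna--Moser construction on the unit cube, rescaled). The arithmetic is favorable precisely because of the scale matching: the mass discrepancy on a cube is $O(\eta_t t^d)$, so the normal displacement required is only $O(\eta_t t)$, while the transverse derivatives of the corrector, obtained by integrating $D\sigma_t\sim \eta_t/t$ over a length $\sim t$, are $O(\eta_t)$---small and \emph{independent of $t$}. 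Hence each local corrector is $C^1$-close to the identity with a bound $O(\eta_t)$ that does not degrade as $t\to 0$.

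The remaining work is to assemble the local correctors into a global $\phi$ and to confirm the $C^1$ errors do not accumulate: one arranges the supports to be essentially disjoint (or boundedly overlapping), performs a coarse redistribution so that the total mass on each cube is exactly right before the fine correction, and then composes. Passing from cubes back to $M$ uses the partition of unity and the finite atlas, and if a single pass leaves a residual density that is $C^0$-small but not identically $1$, I would iterate, the scale-invariant bound $O(\eta_t)$ guaranteeing geometric convergence to a genuine $C^\infty$ conservative $g$ with $\|g-f\|_{C^1}<\epsilon$. The main obstacle throughout is exactly the quantitative local solution of $\mathrm{Jac}(\phi)=\sigma$ with compact support and a $C^1$ bound that scales like amplitude-over-cube-size rather than like a full derivative of $\sigma$; everything else (smoothing, gluing, iteration) is bookkeeping around that single estimate.
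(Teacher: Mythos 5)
First, a framing point: the paper does not prove this lemma at all --- it is quoted verbatim from Avila's regularization theorem (\emph{Acta Math.} \textbf{205} (2010), 5--18), so the only fair comparison is with Avila's argument. Your sketch correctly isolates the actual difficulty (the loss of one derivative in Moser's trick when the Jacobian error is only $C^0$-small) and the central mechanism that defeats it (couple the volume correction to the mollification scale $t$, where the defect has amplitude $\eta_t$ but oscillates at scale $t$, so that correctors cost $O(\eta_t)$ in $C^1$ independently of $t$). This is genuinely the spirit of Avila's proof, and your reductions (passing to $\sigma=(1/\mathrm{Jac}(h))\circ h^{-1}$, checking $\int_M\sigma\,d\mu=\mu(M)$, the estimate $\|D\sigma_t\|=O(\omega(t)/t)$ with $\omega$ the modulus of continuity of $Df$) are correct.

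However, two steps you dismiss as bookkeeping are genuine gaps. (a) The \emph{coarse redistribution} equalizing cube masses is not routine: your per-cube defect bound $O(\eta_t t^d)$ only controls the finest scale, and transporting the excesses globally through the $\sim\log(1/t)$ dyadic scales costs $O(\eta_t)$ in $C^1$ \emph{per scale}, hence $O(\eta_t\log(1/t))$ in total, which does not tend to $0$ for a general modulus of continuity (e.g.\ $\omega(t)\sim 1/\log(1/t)$). To rescue this one must use that $f$ itself preserves $\mu$: since $\|h_t-f\|_{C^0}=O(t\,\omega(t))$ and $\mathrm{vol}(f(Q))=\mathrm{vol}(Q)$, the mass defect of a cube of side $s\geq t$ is actually $O(\omega(t)\,t\,s^{d-1})$ --- a boundary term, not a bulk term --- and only with this cancellation does the multiscale transport sum geometrically to $O(\omega(t))$. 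Your sketch never invokes the conservativity of $f$ at this point, and without it the scheme fails. (b) Your closing move --- ``iterate, with geometric convergence to a genuine $C^\infty$ conservative $g$'' --- is wrong as stated: an infinite composition of smooth correctors converging geometrically in $C^1$ has, a priori, only a $C^1$ limit, and the $C^k$ norms of your scale-$t$ correctors blow up like $\eta_t/t^{k-1}$, so convergence in higher norms is not free. The correct observation (implicit in the rescaling you mention but never exploited) is that after zooming to scale $t$ the density error is small in \emph{every} $C^k$ norm, since $\|D^k\sigma_t\|=O(\omega(t)/t^{k-1})$; hence the prescribed-Jacobian problem at that scale can be solved by ordinary Moser with full smooth control and no derivative loss, making a finite (e.g.\ two-pass, overlapping-cover) construction possible instead of an infinite iteration. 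As it stands, your proposal is a faithful reconstruction of the architecture of Avila's proof but leaves unproved precisely the two estimates where that proof does its real work.
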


\section[Proof of Theorem 1.1]{Proof of Theorem \ref{thm}}
We first give a lemma which is important to the proof of Theorem A.

\begin{lem}\label{lem:1}
There is a residual subset $\mathcal{R}_4$ of $\text{{\em Diff}}^1_\mu(M)$ such that for every $f\in \mathcal{R}_4$, $M$ is the unique homoclinic class. Moreover, if there are two hyperbolic saddles of indices $\alpha$ and $\beta$, then $M$ contains a dense subset of saddles of index $\tau$ for all
$\tau\in [\alpha,\beta]\cap \mathbb{N}$.
\end{lem}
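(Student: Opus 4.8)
The plan is to set $\mathcal{R}_4=\mathcal{R}_2\cap\mathcal{R}_4'$, where $\mathcal{R}_2$ is the Bonatti--Crovisier residual set of Lemma~\ref{thm:connectin} and $\mathcal{R}_4'$ is a second residual set encoding the ``index interval'' property, built by a Baire category argument. For $f\in\mathcal{R}_2$ the map is transitive and $M$ is its unique homoclinic class, so the first assertion is immediate; moreover for such $f$ the periodic orbits are dense in $M$. Hence the entire content is the perturbative claim: \emph{if $f$ has hyperbolic saddles $P,Q$ of indices $\alpha<\beta$, then for each $\tau\in[\alpha,\beta]\cap\mathbb{N}$ and each small ball $B$ meeting $M$ there is a $C^1$-small conservative perturbation of $f$ carrying a hyperbolic saddle of index $\tau$ whose orbit meets $B$.} The endpoints $\tau\in\{\alpha,\beta\}$ are handled by the standard fact that the index-$\alpha$ (resp. index-$\beta$) saddles homoclinically related to $P$ (resp. $Q$) are dense in $H(P)=H(Q)=M$, so only the strictly intermediate $\tau$ require work.

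To prove the claim I would first use transitivity together with the Connecting Lemma (Lemma~\ref{thm:BC}) to produce, after a $C^1$-small conservative perturbation localized near $B$, a heterodimensional cycle associated with $P$ and $Q$: since $\dim W^u(P)+\dim W^s(Q)=(d-\alpha)+\beta>d$, the branch $W^u(P)\cap W^s(Q)$ can be made transverse, while $\dim W^s(P)+\dim W^u(Q)=d-(\beta-\alpha)<d$ forces the return branch to be a quasi-transverse connection of co-index $\beta-\alpha$. I would then shadow this cycle by genuine periodic orbits that spend $n_P$ iterates near $P$ and $n_Q$ iterates near $Q$ before closing up through the two heteroclinic connections. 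Up to the bounded transition maps, the derivative along such an orbit is the product of $Df^{n_P}$ near $P$ and $Df^{n_Q}$ near $Q$; along the $\beta-\alpha$ center directions, which expand under $P$ and contract under $Q$, the exponents are weighted averages of the form $\frac{n_P\lambda_P+n_Q\lambda_Q}{n_P+n_Q}$ with $\lambda_P>0$ and $\lambda_Q<0$. Tuning the ratio $n_P/n_Q$ makes these averages change sign, and a Franks-type perturbation of the tangent cocycle inside $\text{Diff}^1_\mu(M)$ (in the spirit of \cite{BDP,BD2}) makes them cross zero one at a time, so that exactly $\tau-\alpha$ of the center directions become contracting, yielding a hyperbolic saddle of index $\tau$.

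Finally I would upgrade existence to a residual property. Fix a countable basis $\{B_i\}$ of $M$. Let $\mathcal{U}_{i,\tau}$ be the set of $f$ possessing a hyperbolic saddle of index $\tau$ whose orbit meets $B_i$ (open, by persistence of hyperbolic orbits and openness of meeting $B_i$), and let $\mathcal{V}_\tau$ be the open set of $f$ possessing hyperbolic saddles of indices $\alpha'\le\tau\le\beta'$ with $\alpha'<\beta'$. The perturbative claim shows $\mathcal{U}_{i,\tau}$ is dense in $\mathcal{V}_\tau$, whence $\mathcal{U}_{i,\tau}\cup\big(\text{Diff}^1_\mu(M)\setminus\overline{\mathcal{V}_\tau}\big)$ is open and dense. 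Setting $\mathcal{R}_4'=\bigcap_{i,\tau}\big(\mathcal{U}_{i,\tau}\cup(\text{Diff}^1_\mu(M)\setminus\overline{\mathcal{V}_\tau})\big)$ and $\mathcal{R}_4=\mathcal{R}_2\cap\mathcal{R}_4'$ gives the statement: for $f\in\mathcal{R}_4$ with saddles of indices $\alpha<\beta$ one has $f\in\mathcal{V}_\tau$, hence $f\in\mathcal{U}_{i,\tau}$ for every $i$, i.e. the index-$\tau$ saddles meet every $B_i$ and are therefore dense in $M$. Lemma~\ref{thm:AV} ensures all of this is carried out in the conservative category.

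The main obstacle I anticipate is conservativity of the index-changing step. Both the creation of the heterodimensional cycle and, above all, the mixing of the center Lyapunov exponents must be realized by perturbations inside $\text{Diff}^1_\mu(M)$, so every modification of the derivative must respect the volume constraint (the analogue of $\det=1$ on the tangent cocycle). Guaranteeing that the $\beta-\alpha$ center exponents can be driven across zero \emph{individually} rather than in blocks, while preserving $\mu$, and that the shadowing orbits stay genuinely hyperbolic of the desired index after the perturbation, is the delicate technical point, and it is precisely here that the conservative connecting and perturbation machinery of \cite{BC,BDP} is required.
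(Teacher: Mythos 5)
Your proposal is correct in outline but follows a genuinely different route from the paper. The paper's entire proof of this lemma is a citation: Lemma \ref{thm:connectin} supplies the residual set on which $M$ is the unique homoclinic class, and the density of intermediate-index saddles is quoted as the conservative version of Theorem 1.1 of \cite{ABCDW} (equivalently, Lemma 3.8 of \cite{LSY}). What you have done is reconstruct, in sketch form, the proof of that cited theorem: creation of a heterodimensional cycle via the conservative Connecting Lemma, periodic orbits shadowing the cycle with prescribed proportions of time near $P$ and $Q$, a conservative Franks-type perturbation driving the $\beta-\alpha$ averaged center exponents across zero one at a time, and a Baire argument with the open sets $\mathcal{U}_{i,\tau}$ and $\mathcal{V}_\tau$ to upgrade density of such perturbations to a residual property. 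This skeleton is the standard and correct one, and your Baire step is sound, with one small repair: density of $\mathcal{U}_{i,\tau}$ in $\mathcal{V}_\tau$ needs you to first perturb an arbitrary $f\in\mathcal{V}_\tau$ into the transitive residual set $\mathcal{R}_2$, since your perturbative claim uses transitivity, which a general element of $\mathcal{V}_\tau$ need not have; as hyperbolic saddles persist under such a perturbation, this costs nothing. The trade-off between the two routes is clear: the paper's proof is short and rigorous by delegation, while yours is self-contained in structure but leaves unproved exactly the hard technical core, namely producing, inside $\text{Diff}^1_\mu(M)$, periodic orbits of every intermediate index from a co-index $\beta-\alpha$ cycle (separating the zero-crossings of the center exponents while respecting the volume constraint). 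That step is precisely the content of \cite{ABCDW} and \cite{LSY} together with the conservative perturbation machinery (conservative Franks lemma, \cite{Ar}, \cite{BDP}); as written it is a plan rather than a proof, so to be complete you should either carry out that machinery in full or simply cite it, as the paper does.
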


{\bf Proof.} This is the direct result of Lemma \ref{thm:connectin} and the conservative version of Theorem 1.1 of \cite{ABCDW}(or see the Lemma 3.8 of \cite{LSY}).
\hfill$\Box$

Now, we  recall the criteria of ergodicity of \cite{HHTU}.
Given a  diffeomorphism $f$ and a hyperbolic periodic point $P$, we define two invariant sets:
$$\Lambda^u(P)=\{x\in \mathcal{O}: W^s(P)\pitchfork W^u(x)\neq \emptyset\},\ \ \Lambda^s(P)=\{x\in \mathcal{O}: W^u(P)\pitchfork W^s(x)\neq \emptyset\},$$
where $\mathcal{O}$ is the set of Oseledec regular points and $W^s$ ($W^u$)
is the Pesin global stable (unstable) manifold.

\begin{lem}\label{thm:hhtu}
{\em (Theorem A of \cite{HHTU})} Let $f\in \text{\em Diff}^r_\mu(M)$ for $r>1$. If $\mu
(\Lambda^s)>0$ and  $\mu (\Lambda^u)>0$ for some hyperbolic periodic
point $P$, then
$$\Lambda(P):=\Lambda^s(P)\cap \Lambda^u(P)= \Lambda^s(P)= \Lambda^u(P)\ (\text{\em mod}\ 0)$$
and $f$ is ergodic on $\Lambda(P)$. Moreover, $f$ is non-uniformly
hyperbolic on $\Lambda(P)$.

\end{lem}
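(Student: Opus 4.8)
The plan is to run a Hopf-type argument adapted to the non-uniformly hyperbolic (Pesin) setting, with the absolute continuity of the Pesin stable and unstable laminations doing the heavy lifting. Fix a continuous function $\varphi:M\to\mathbb{R}$ and define the forward and backward time averages $\varphi^+(x)=\lim_{n}\frac{1}{n}\sum_{i=0}^{n-1}\varphi(f^i x)$ and $\varphi^-(x)=\lim_{n}\frac{1}{n}\sum_{i=0}^{n-1}\varphi(f^{-i}x)$. By Birkhoff's theorem both limits exist and coincide on a set $G$ of full $\mu$-measure. The two basic Hopf observations are that $\varphi^+$ is constant along each Pesin stable manifold $W^s(\cdot)$ and $\varphi^-$ is constant along each Pesin unstable manifold $W^u(\cdot)$ on the set $\mathcal{O}$ of regular points. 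Since $P$ is periodic, $\varphi^+(P)=\varphi^-(P)=:c_P$ is simply the orbit average of $\varphi$ over the period of $P$.

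Next I would transport the value $c_P$ along the intersection data in the definitions of $\Lambda^u(P),\Lambda^s(P)$. If $x\in\Lambda^u(P)$, then $W^u(x)$ meets $W^s(P)$ transversally at a point $z$, so $\varphi^-(x)=\varphi^-(z)$ while $\varphi^+(z)=c_P$ because $z\in W^s(P)$. To upgrade this pointwise remark to an almost-everywhere statement, I would invoke the absolute continuity of the Pesin unstable lamination: for a.e. $x\in\Lambda^u(P)$ the leaf $W^u(x)$ has a.e. point in $G$, so $\varphi^-=\varphi^+$ holds along it $\mu$-a.e.; combined with the constancy of $\varphi^-$ on the leaf and with $\varphi^+\equiv c_P$ near the transversal, this forces $\varphi^-(x)=c_P$ for a.e. $x\in\Lambda^u(P)$. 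Symmetrically, using absolute continuity of the stable lamination, $\varphi^+(x)=c_P$ for a.e. $x\in\Lambda^s(P)$.

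I would then compare the two sets on the full-measure set $G$ where $\varphi^+=\varphi^-$. An $x\in\Lambda^u(P)\cap G$ satisfies $\varphi^+(x)=\varphi^-(x)=c_P$; running the local product structure of the Pesin laminations backwards (absolute continuity again) produces, for a.e. such $x$, a transverse intersection of $W^s(x)$ with $W^u(P)$, i.e. $x\in\Lambda^s(P)$ mod $0$, and the reverse inclusion is symmetric, giving $\Lambda^s(P)=\Lambda^u(P)=\Lambda(P)\pmod 0$. As $c_P$ is independent of $x$, every continuous $\varphi$ has $\mu$-a.e. constant Birkhoff average on $\Lambda(P)$, and by density of $C^0$ in $L^1$ this yields ergodicity of $f$ on $\Lambda(P)$. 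For non-uniform hyperbolicity, note that a.e. $x\in\Lambda(P)$ carries an unstable leaf meeting $W^s(P)$ and a stable leaf meeting $W^u(P)$ transversally; counting dimensions at the intersection points gives $\dim W^u(x)\geq\dim W^u(P)$ and $\dim W^s(x)\geq\dim W^s(P)$, whence $\dim W^s(x)+\dim W^u(x)\geq d$, so there is no zero Lyapunov exponent and $\mu|_{\Lambda(P)}$ is hyperbolic.

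I expect the main obstacle to lie in the second and third steps, namely converting the purely pointwise transversality encoded in $\Lambda^u(P)$ and $\Lambda^s(P)$ into genuine full-measure statements feeding the Hopf argument. This is exactly where the absolute continuity of the Pesin laminations and the attendant Fubini/saturation bookkeeping must be used with care: the Pesin manifolds are only defined on a full-measure set and depend merely measurably on the base point, so the ``constant along leaves'' reasoning has to be matched against the a.e. coincidence $\varphi^+=\varphi^-$ without circularity, and one must ensure that the exceptional null sets introduced at each leaf do not accumulate to positive measure after integration.
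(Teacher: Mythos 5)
First, a point of reference: the paper you were given does not prove this lemma at all; it is imported verbatim as Theorem A of \cite{HHTU}, so your attempt must be measured against the argument of Rodr\'{\i}guez Hertz--Rodr\'{\i}guez Hertz--Tahzibi--Ures. Your general plan (Hopf argument plus absolute continuity of the Pesin laminations) and your final dimension count for non-uniform hyperbolicity on $\Lambda(P)$ are sound, but the core step of your ergodicity argument is not just incomplete --- it is false. You claim that $\varphi^-(x)=c_P$ for a.e.\ $x\in\Lambda^u(P)$, where $c_P$ is the Birkhoff average of $\varphi$ over the periodic orbit of $P$. Test this on the simplest system satisfying the hypotheses: a linear Anosov diffeomorphism of the $2$-torus with $P$ the fixed point, where $\Lambda^s(P)=\Lambda^u(P)=M\pmod 0$. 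Ergodicity gives $\varphi^-(x)=\int\varphi\,d\mu$ a.e., which differs from $c_P=\varphi(P)$ for most $\varphi$; so no correct proof can transport the value $c_P$ to a positive measure set. The precise point of failure is the phrase ``combined with \dots\ $\varphi^+\equiv c_P$ near the transversal'': $\varphi^+$ equals $c_P$ only \emph{on} the null leaf $W^s(P)$ itself, which meets $W^u(x)$ in a countable set (by transversality), so at leaf-a.e.\ point of $W^u(x)$ you have no control on $\varphi^+$ whatsoever. Indeed, in the Anosov example leaf-a.e.\ point of $W^s(P)$ has backward average equal to the space average and forward average equal to $c_P$, hence is \emph{not} in $G$; this is exactly why the intersection point $z$ can never be assumed Birkhoff-generic, and absolute continuity cannot repair it, because the exceptional set here is an entire leaf, not a null ``accident.'' The same defect propagates into your derivation of $\Lambda^s(P)=\Lambda^u(P)\pmod 0$, which rests on the false identity $\varphi^\pm\equiv c_P$ together with an unexplained appeal to ``local product structure'' producing intersections of $W^s(x)$ with the single null leaf $W^u(P)$ --- note that a priori a point of $\Lambda^u(P)$ may have zero exponents, so it need not even carry a stable manifold of complementary dimension.

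The proof in \cite{HHTU} never evaluates Birkhoff averages on $W^s(P)\cup W^u(P)$; the invariant manifolds of $P$ serve purely as a geometric bridge. The missing ingredient in your proposal is the inclination ($\lambda$-)lemma: for typical $x\in\Lambda^u(P)$, the transverse intersection $W^u(x)\pitchfork W^s(P)\neq\emptyset$ forces $f^n(W^u(x))$ to accumulate in the $C^1$ topology on compact disks of $W^u(P)$. Since $\mu(\Lambda^s(P))>0$, one can choose a Pesin block and a \emph{positive-measure family} of points $y\in\Lambda^s(P)$, with stable manifolds of uniform size, each crossing $W^u(P)$ transversally and hence crossing $f^n(W^u(x))$ for $n$ large. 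Absolute continuity and Fubini are then applied to this positive-measure family of stable leaves --- not to the single leaf $W^s(P)$ --- to conclude that $\varphi^-$ restricted to $\Lambda^u(P)$ and $\varphi^+$ restricted to $\Lambda^s(P)$ are a.e.\ equal to one common constant, which is the space average rather than $c_P$; the mod-$0$ equality of the two sets comes out of the same leaf-crossing argument. Observe that both hypotheses $\mu(\Lambda^s(P))>0$ and $\mu(\Lambda^u(P))>0$ enter symmetrically and essentially in this scheme, whereas in your proposal the hypothesis $\mu(\Lambda^s(P))>0$ is never genuinely used --- a structural warning sign that the argument could not have been correct.
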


\begin{rk}\label{rk:hhtu}
It is obvious  that if $\Lambda^s(P)\cup \Lambda^u(P)= M (\text{\em mod}\ 0)$, then $f$
is ergodic respect to $\mu$.
\end{rk}

In the following Lemma, we give a dense subset $\mathcal{D}\subset \text{PH}^1_\mu(M)$. In fact, to prove Theorem A, we only need to prove that the stable ergodicity can $C^1$ approximate to each system of a dense subset of $\mathcal{P}$.

\begin{lem}\label{pro2}
There is a $C^1$ residual subset $\mathcal{D}$ in $\text{\em{PH}}^1_\mu(M)$ such that for any
$f\in \mathcal{D}$, we have

(1) $f$ is stably accessible  and

(2) there exists a robust finest dominated splitting of $Df$,
\begin{equation}\label{eq:1}
 TM=E_{1}\oplus
E_{2}\oplus \cdots\oplus E_{k},
\end{equation} such that  the Lyapunov exponents at $x$ in $E_i$ are equal for $\mu$-a.e. $x\in M$ and all $i=1,2,\cdots,k$.

\end{lem}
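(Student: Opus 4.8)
The plan is to realise $\mathcal{D}$ as a finite intersection of residual (and open-dense) sets, each responsible for one feature of the statement, and then verify the two conclusions pointwise. Since partial hyperbolicity is an open condition, $\text{PH}^1_\mu(M)$ is open in $\text{Diff}^1_\mu(M)$, so any residual subset of $\text{Diff}^1_\mu(M)$ meets $\text{PH}^1_\mu(M)$ in a residual set, and the open-dense set of Lemma \ref{thm:DW} restricts to an open-dense set. I would therefore set
$$\mathcal{D} = \left(\mathcal{R}_0 \cap \mathcal{R}_5\right)\cap \mathcal{R}_1,$$
where $\mathcal{R}_0$ is the residual set of continuity points of the Lyapunov map introduced in Section \ref{sec:pre}, $\mathcal{R}_1$ is the open-dense accessible set of Lemma \ref{thm:DW}, and $\mathcal{R}_5$ is a residual set, constructed below, on which the finest dominated splitting is robust.

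Conclusion (1) is then essentially free: $\mathcal{R}_1$ is open and each of its members is accessible, so $\mathcal{R}_1$ is itself a $C^1$-neighbourhood consisting of accessible diffeomorphisms around each of its points. Hence every $f\in\mathcal{D}\subset\mathcal{R}_1$ is stably accessible. For the robustness of the finest dominated splitting I would argue by semicontinuity. Let $N(f)$ be the number of bundles in the finest dominated splitting of $f$. Because domination is an open property and the continuation of a dominated splitting is dominated, the continuation of the finest splitting of $f$ equips every nearby $g$ with a dominated splitting into $N(f)$ bundles; thus $N(g)\ge N(f)$, so $N$ is lower-semicontinuous and integer-valued, hence locally constant on the residual set $\mathcal{R}_5$ of its continuity points. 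At such an $f$ one has $N(g)=N(f)$ for $g$ close, so the continuation — being a dominated splitting with the maximal admissible number of bundles — must coincide with the finest dominated splitting of $g$ by uniqueness (\cite{BDV2}). This is exactly the asserted robustness.

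For the equality of the Lyapunov exponents inside each bundle I would invoke the Bochi–Viana theory of generic Lyapunov exponents, whose conclusion holds precisely on the continuity points $\mathcal{R}_0$. Their perturbation argument shows that at $f\in\mathcal{R}_0$, for $\mu$-a.e.\ $x$, any two distinct consecutive Oseledec exponents must be separated by a dominated splitting; otherwise one could make a $C^1$-perturbation collapsing (or mixing) them, strictly altering some $LE_i(f)$ and contradicting continuity. Consequently the Oseledec decomposition into blocks of equal exponents is itself a dominated splitting almost everywhere. Since the finest dominated splitting $TM=E_1\oplus\cdots\oplus E_k$ refines every dominated splitting (\cite{BDV2}), it refines this block decomposition; as each Oseledec block carries a single exponent, every $E_i$ lies inside one block and hence has all its Lyapunov exponents equal for $\mu$-a.e.\ $x$, which is (2).

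The main obstacle is this last step: it rests entirely on the delicate $C^1$ perturbation machinery of Bochi and Viana that allows one to collapse non-dominated exponents, and on pairing it correctly with the uniqueness and refinement structure of dominated splittings recorded in Section \ref{sec:pre}. By comparison, the semicontinuity argument giving $\mathcal{R}_5$ and the accessibility statement (1) are routine, so the proof reduces to assembling these ingredients on the residual intersection $\mathcal{D}$.
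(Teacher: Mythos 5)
Your construction of $\mathcal{R}_5$ (semicontinuity of the number of bundles of the finest dominated splitting) and your treatment of conclusion (1) are fine, and they match in spirit what the paper imports from Lemma 2.2 of \cite{LSY} and Lemma \ref{thm:DW}. The gap is in the final step of (2). Bochi--Viana, at a continuity point $f\in\mathcal{R}_0$, gives that for $\mu$-a.e.\ $x$ the Oseledec splitting is dominated \emph{along the orbit of $x$}; this is a measurable, orbit-by-orbit statement. The refinement property you then invoke --- ``the finest dominated splitting refines every dominated splitting'' --- holds for dominated splittings over a common compact invariant set, in particular for global splittings over $M$; it does not apply to a family of orbit-wise dominated splittings defined only on a full-measure set. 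Restricted to a single orbit, the global finest splitting need not be the finest dominated splitting of that orbit: a priori, on a positive-measure set of points the Oseledec splitting could be dominated along orbits at an index $j$ lying strictly inside some bundle $E_i$, without any global dominated splitting of index $j$ existing on $M$. Your argument cannot rule this out, and that is exactly the situation in which conclusion (2) would fail.

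This is precisely where the paper's proof does real work, and where accessibility enters as an ingredient of (2) rather than only as conclusion (1). The paper sets $D_j(f,l)$ to be the set of points admitting an $l$-dominated splitting of index $j$ along their orbit --- a compact invariant set --- and proves a dichotomy: either $\mu(D_j(f,l))=0$ for all $l$, or $D_j(f,l)=M$ for some $l$. The nontrivial half uses Lemma \ref{thm:AB} (hence the paper intersects with $\mathcal{R}_3$, which is absent from your $\mathcal{D}$) together with Avila's regularization (Lemma \ref{thm:AV}) and Lemma \ref{lem:3}: if $\mu(D_j(f,l))>0$, one finds a $C^2$, still accessible $g$ that is $C^1$-close to $f$ and carries a $g$-invariant set $\widetilde{\Lambda}$ close to $D_j(f,l)$ both in measure and inside $B_\eta(D_j(f,l))$; since a.e.\ orbit of $g$ is dense, $\widetilde{\Lambda}$ is dense, so $D_j(f,l)$ is $\eta$-dense for every $\eta$ and, being compact, equals $M$. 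Thus positive-measure orbit-wise domination at index $j$ upgrades to a \emph{global} dominated splitting of index $j$, which by finestness forces $j=\dim(E_1)+\cdots+\dim(E_i)$ for some $i$; only after this upgrading does Bochi--Viana yield that all exponents inside each $E_i$ coincide $\mu$-a.e. Without this step your proof does not go through.
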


\begin{proof}
By Lemma 2.2 of \cite{LSY}, there is an open and dense subset  $\mathcal{D}_1\subset \text{PH}^1_\mu(M)$ such that, for each $f\in \mathcal{D}_1$, $Df$ has a robust finest dominated splitting $ TM=E_{1}\oplus
E_{2}\oplus \cdots\oplus E_{k}$. By Lemma \ref{thm:DW} and \ref{thm:connectin}, there is a residual set $\mathcal{D}_2\subset\mathcal{D}_1\subset \text{PH}^1_\mu(M)$ such that each $f\in \mathcal{D}_2$ is stably accessible and $M$ is the unique homoclinic class. Set $\mathcal{D}=\mathcal{D}_2\cap \mathcal{R}_3$, where $\mathcal{R}_3$ refers to Lemma \ref{thm:AB}. We shall prove that $\mathcal{D}$ satisfies the lemma.

For any $1\leq i< d$ and $l\in \mathbb{N}$, denote $D_i(f,l)$ by the set of points $x$ such that there is a $l$-dominated splitting of index $i$ along the orbit of $x\in M$. Then $D_i(f,l)$ is a compact invariant set. Set $$\Gamma_i(f,l)=M\setminus D_i(f,l) \text{ and } \Gamma_i(f, \infty)=\bigcap\limits_{i=1}^\infty \Gamma_i(f,l), \text{ for } i=1,2, \cdots, d-1.$$
We shall show that, up to zero measure, either $\Gamma_i(f, \infty)=M$ or $D_i(f, l)=M$ for some $l$. In fact, if $\mu(D_i(f, l))=0$ for all $l$, then  $\Gamma_i(f, \infty)=M \text{ mod } 0$. If $\mu(D_i(f, l))>0$ for some $l$, by Lemma \ref{thm:AB} and \ref{thm:AV}, for any $\eta>0 $ there is $g\in\text{Diff}^2_\mu(M)$ which is $C^1$ close to $f$ and a $g$-invariant Borel set $\widetilde{\Lambda}$ such that $\widetilde{\Lambda}\subset B_\eta(D_i(f,l))$ and $\mu(\widetilde{\Lambda} \triangle D_i(f,l))<\eta.$ Since $f$ is stably accessible and $g$ is $C^2$, Lemma \ref{lem:3} implies that $\widetilde{\Lambda}$ is dense in $M$. So,  $D_i(f,l)$ is $\eta$-dense in $M$. By the arbitrary of $\eta$, $D_i(f,l)$ is dense in $M$. So we have $D_i(f,l)=M$ since $D_i(f,l)$ is compact.

Noting that (\ref{eq:1}) is finest dominates splitting, $D_i(f,l)=M$ if and only if $i=\dim (E_1)+\cdots+\dim(E_j)$ for some $j=1, \cdots, k$.
By \cite{BV}, for $\mu$-a.e. $x\in M$ and  any $i=1,2,\cdots, k$, the Lyapunov exponents in $E_i$ are equal.
\end{proof}

\begin{proof}[Proof of Theorem A]
For any $f\in \text{PH}^1_\mu$, $r\geq 1$ and $\varepsilon>0$, we set
$$\mathcal{U}^r(f,\varepsilon)=\{g\in \text{PH}^r_\mu: \ g \text{ is } \varepsilon\text{-}C^1\text{-}\text{close to } f\}.$$
To prove Theorem A, we only need to prove that for any $f\in \mathcal{D}\cap\mathcal{P}\cap \mathcal{R}_0$ and any $\varepsilon>0$, there is $g\in \mathcal{U}^2(f,\varepsilon)$ such that $g$ is stably ergodic.

Since $f\in \mathcal{P}$, there is an ergodic measure $\mu_1$ such that $\lambda^+_{\mu_1}\leq 0,$ where $\lambda^+_{\mu_1}$ is the largest and smallest Lyapunov exponent of $\mu_1$ in $E^c_f$.
By Ergodic Closing Lemma (\cite{Ar}), $\mu_1$-a.e. point is well closable. Then, for any $\epsilon_1\leq \frac{\varepsilon}{3},$ there are $f'\in \mathcal{U}^1(f, \frac{\epsilon_1}{2})$ and a periodic point $P'$ of $f'$ such that $\lambda_{s+c}(P')\leq \frac{\epsilon_1}{3}$. If $\lambda_{s+c}(P')<0$, then $P'$ is a hyperbolic periodic point with index $s+c$. Otherwise,   using the conservative version of Frank's Lemma, one can get a new diffeomorphism $f''\in \mathcal{U}^1(f', \frac{\epsilon_1}{2})$  which  has the periodic point $P''$ with index $s+c$. Anyway, for the $\epsilon_1$, there is $f_1\in \mathcal{U}^1(f, \epsilon_1)$ such that $f_1$ has a hyperbolic periodic point $P_1$ with index $s+c$.

Since $f$ has a robustly non-hyperbolic center bundle, if we select $\epsilon_1$ small enough, there is an ergodic measure $\nu$ of $f_1$ such that $\lambda^-_\nu\geq 0$, where $\lambda^-_{\nu}$ is the smallest  Lyapunov exponent of $\nu$ in $E^c_{f_1}$. By the similar discussion as above, for any $\epsilon_2\leq \epsilon_1$, there is $f_2\in \mathcal{U}^1(f_1, \epsilon_2)$ such that $f_2$ has a hyperbolic periodic point $Q$ with index $s$. Since $P_1$ is a hyperbolic periodic point of $f_1$, the continuation $P$ of $P_1$ is a hyperbolic point of $f_2$ and has the same index as the $P_1$'s if $\epsilon_2$ is small enough.

That is to say, for any $\epsilon_1>0$, there  is $f_2\in \mathcal{U}^1(f, 2\epsilon_1)$ such
that $f_2$ has two hyperbolic periodic points $P$ and $Q$ with indices $s+c$ and $s$ respectively.

 If $\epsilon_3\in(0, \epsilon_2)$ is small enough, any $g\in \mathcal{U}^1(f_2, \epsilon_3)$  has two hyperbolic periodic points of indices $s$ and $s+c$ by the hyperbolicity of $P$ and $Q$. Moreover,  by Lemma \ref{lem:1}, any $g\in \mathcal{U}^1(f_2, \epsilon_3)\cap \mathcal{R}_4$ has a dense subset of saddles of index $i$ for all $i\in \{s, s+1, \cdots, s+c\}$. So, there is an open set $\mathcal{V}_0\subset \mathcal{U}^1(f_2, \epsilon_3)$ such that each $g\in \mathcal{V}_0$ has a subset of hyperbolic periodic points of index  $i$ for all $i\in \{s, s+1, \cdots, s+c\}$.

\begin{lem}\label{lem:5}
If there are two partially hyperbolic points $P',Q'$ with indices $i,i+1$ and one-dimension center,
then there exists a $cu$-blender of index $i$ associated to $(P,Q)$ and $P,Q$ are homoclinic related to $P',Q'$
respectively.
\end{lem}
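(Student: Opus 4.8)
The plan is to construct a $cu$-blender of index $i$ directly from the two partially hyperbolic periodic points $P',Q'$ with indices $i$ and $i+1$ and one-dimensional center. First I would examine $P'$: since its index is $i$ and it has a one-dimensional center direction, $Df$ is necessarily expanding along $E^c$ at $P'$, so condition (1) of Definition \ref{dfn:blender} holds for $P'$ after possibly relabeling. The key geometric object is then a small open \emph{blender box} $\text{Bl}^u(P')$ placed around (an iterate of) $P'$; I would use the local product structure given by the partially hyperbolic splitting to set up coordinates in which the stable manifold $W^s(P')$ appears as a nearly-horizontal $i$-dimensional sheet and the center-unstable directions are nearly vertical. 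The robustness built into the definition of partial hyperbolicity and into dominated splittings (cited from \cite{BDV2}) will be what ultimately delivers condition (4).

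Next I would verify the covering/intersection property (2). The standard mechanism, following \cite{BD,BD2,HHTU2}, is to exhibit a pair (or finite family) of affine-like graph-transform maps whose images cover $\text{Bl}^u(P')$ in the appropriate strong-stable direction, so that the union of their domains forms a blender in the sense that every suitably placed $(d-i)$-strip must cross $W^s(P')$ transversally. Concretely, I expect to use the local dynamics at $P'$ together with a return induced by a homoclinic excursion to produce the contracting/expanding iterated function system whose limit set forces the crossing; this is where the one-dimensionality of $E^c$ is essential, since it reduces the covering condition to a one-dimensional interval-covering argument that is robust under $C^1$ perturbation.

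For condition (3) I would bring in $Q'$. Since $Q'$ has index $i+1$, its unstable manifold $W^u(Q')$ has dimension $d-i-1$, which is exactly the dimension of the vertical disk required to pass through the blender. The task is to arrange, after a $C^1$-perturbation, that a piece of $W^u(Q')$ is a vertical disk through $\text{Bl}^u(P')$ — that is, a $(d-i-1)$-disk almost tangent to $E^u$, centered in the box, with radius large compared to the box. Here I would invoke the Connecting Lemma (Lemma \ref{thm:BC}) to create the relevant intersections, and transitivity together with the unique-homoclinic-class structure (Lemmas \ref{thm:connectin}, \ref{lem:1}) to guarantee that $P'$ and $Q'$ lie in the same class, which is what lets the invariant manifolds be brought into the box. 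Once the vertical disk is in place, the blender property propagates it under iteration so that it meets $W^s(P')$, yielding the homoclinic relations asserted: $P$ is homoclinically related to $P'$ and $Q$ to $Q'$ by construction of the index-$i$ and index-$(i+1)$ saddles $P,Q$ sitting in the blender.

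The main obstacle I anticipate is condition (2), the robust covering property: verifying that \emph{every} $(d-i)$-strip well placed in the box crosses $W^s(P')$, and that this survives $C^1$-perturbation, is the genuinely nontrivial part. The difficulty is that the strips are higher-dimensional (the ambient dimension need not be small), so the covering argument cannot be reduced to a single interval map; one must control the graph-transform dynamics uniformly across the whole family of admissible strips and show the covering is open in the $C^1$ topology. I would handle this by choosing the induced return map so that the one-dimensional center expansion dominates the geometry and reduces the essential covering to the center direction, keeping the strong-stable and strong-unstable directions under control via the dominated-splitting estimates; the remaining conditions (1), (3), (4) are then comparatively routine, (1) being immediate from the index and center dimension, (3) following from the dimension count plus the Connecting Lemma, and (4) following from the $C^1$-openness of dominated splittings and of transverse intersections.
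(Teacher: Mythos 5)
The paper's own ``proof'' of this lemma is a one-line citation: it is the conservative version of the construction in subsection 4.1 of \cite{BD2} (see also \cite{HHTU2}), so the real task is to reconstruct that construction, and your sketch, while pointing at the right references, gets the key mechanism wrong. The genuine gap is in your treatment of condition (2) of Definition \ref{dfn:blender}. You propose to obtain the covering property from ``the local dynamics at $P'$ together with a return induced by a homoclinic excursion'', bringing in $Q'$ and the Connecting Lemma only afterwards, for condition (3). That cannot work. A homoclinic excursion of $P'$ alone produces a horseshoe whose return branches are all uniformly expanding in the one-dimensional center direction: the center multiplier along a return is a bounded transition derivative times $\lambda_c^{n}$ with $\lambda_c>1$ and $n$ large (the orbit must spend a long time near $P'$ to re-enter the box), so the inverse branches are strong contractions and the images of the center interval under the inverse branches have total length strictly smaller than the interval. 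One gets a Cantor set in the center direction, never a covering, hence no blender; indeed every saddle created this way has the same index $i$, whereas a $cu$-blender of index $i$ associated to $(P,Q)$ requires a saddle $Q$ of index $i+1$ in the same local structure. The essential point of \cite{BD2,HHTU2} is that the excursion must pass near $Q'$, where the center is \emph{contracted}: tuning the time $m$ spent near $Q'$ against the time $n$ spent near $P'$ makes the center multiplier of the second branch, roughly $\sigma_c^{m}\lambda_c^{n}$ with $\sigma_c<1$, as close to $1$ as needed, and this weak contraction/expansion is exactly what allows the two branches to overlap and cover a center interval robustly. For orbits to make such an excursion one must first create a heterodimensional cycle between $P'$ and $Q'$; the intersection $W^s(P')\cap W^u(Q')$ is quasi-transverse (the dimensions sum to $d-1$), so the conservative Connecting Lemma (Lemma \ref{thm:BC}) must be invoked \emph{before} the blender box is built, not at the end as a garnish for condition (3).

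Two secondary omissions. First, every perturbation in the construction (the connecting perturbation and the unfolding of the cycle) must preserve $\mu$; this is not automatic from \cite{BD2}, whose construction is dissipative, and it is precisely why the paper cites \cite{HHTU2}, whose contribution is carrying out the construction in the conservative setting. Your sketch never addresses volume preservation. Second, the saddles $P,Q$ of the blender are new periodic orbits produced by the unfolding of the cycle, and their homoclinic relation to $P',Q'$ is proved by a shadowing argument along the cycle; it is not ``by construction'' in the sense of a dimension count plus transversality, and it too presupposes the cycle you postponed.
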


\begin{proof}
This is the conservative version of  subsection 4.1 of \cite{BD2}.  One also can  see the construction in \cite{HHTU2}.
\end{proof}

Now we continue to prove the Theorem. Selecting a diffeomorphism $g_0\in \mathcal{V}_0$, since $g_0$ has two saddles of indices $s+c-1$ and $s+c$, by Lemma \ref{lem:5} and the robust property of blender, there is   an open subset $\mathcal{V}_1\subset  \mathcal{V}_0$, such that any $g\in \mathcal{V}_1$ has a $cu$-blender of index $s+c-1$.  Selecting a diffeomorphism $g_1\in \mathcal{V}_1$, since $g_1$ has two saddles of indices $s+c-2$ and $s+c-1$, by Lemma \ref{lem:5} and the robust property of blender again, there is   an open subset $\mathcal{V}_2\subset  \mathcal{V}_1$, such that any $g\in \mathcal{V}_2$ has a $cu$-blender of index $s+c-2$. Noting that $ \mathcal{V}_2\subset \mathcal{V}_1$, $g$ also has a  $cu$-blender of index $s+c-1$. Inductively, we obtain open sets $\mathcal{V}_c\subset \cdots\subset \mathcal{V}_1$ such that for any $1\leq i\leq c$ and any $g\in \mathcal{V}_i$, $g$ has $i$ $cu$-blenders of indices $s+c-1, s+c-2,\cdots, s+c-i$ respectively. Especially, for any $g\in\mathcal{V}_c$ and any $s\leq i<s+c$, there is $cu$-blender of index $i$ associated to $(P_{i,g}, Q_{i+1,g})$.

To continue the proof, we need the following lemma.

\begin{lem}\label{lem:2}
If $\varepsilon$ is small enough, then there is  $g\in\mathcal{V}_c\cap \text{\em Diff}^2(M)$ such that $g$ is stably  ergodic.
\end{lem}

\begin{proof} Firstly, if $$\int_M\lambda_{s+c}(f,x)d\mu<0\ (\text{or  }\int_M\lambda_{s+1}(f,x)d\mu>0),$$ then for $\varepsilon$ small enough, we have
$$\int_M\lambda_{s+c}(g, x)d\mu<0\ (\text{or  }\int_M\lambda_{s+1}(g, x)d\mu>0), \ \ \forall g\in\mathcal{V}_c$$ since $f$ is the continuous point of Lyapunov map.
 Then, by Theorem 4 (or section 1.8) of \cite{BDP}, each $g\in\mathcal{V}_c\cap \text{Diff}^2(M)$ is stably ergodic.

Otherwise, there are $1\leq t\leq c$ and $1\leq\kappa<t$ such that
$$\int_M\ln|\det (Df|_{E^{c_\kappa}_f})|d\mu<0 \text{ and }\ \int_M\ln|\det (Df|_{E^{c_{\kappa+1}}_f})|d\mu>0,$$
where $E^c_f=E^{c_1}\oplus\cdots \oplus E^{c_t}$ is robustly finest dominated splitting.
Then,
\begin{equation}\label{eq:3}
\int_M\ln|\det (Dg|_{E^{c_\kappa}_g})|d\mu<0 \text{ and }\ \int_M\ln|\det (Dg|_{E^{c_{\kappa+1}}_g})|d\mu>0
\end{equation}
for any $g\in \mathcal{V}_c$ if $\varepsilon$ is small enough.

We Choose a diffeomorphisim $h\in \mathcal{V}_c\cap \mathcal{D}\cap \mathcal{R}_0$. By Lemma \ref{pro2},
$$\int_M\ln|\det (Dh|_{E^{c_\tau}_h})|d\mu=\dim(E^{c_\tau}_g)\cdot \int_M\lambda_{s+\cdots+c_\tau}(h, x)d\mu,\ \text{for } \tau=\kappa, \kappa+1. $$
Then
 (\ref{eq:3}) implies that
$$\int_M\lambda_{s+\cdots+c_\kappa}(h, x)d\mu<0\ \text{ and }\ \int_M\lambda_{s+\cdots+c_\kappa+1}(h,x)d\mu>0.
$$
Noting that $h$ is the continuous point of Lyapunov map, we have
\begin{equation}\label{eq:2}
\int_M\lambda_{s+\cdots+c_\kappa}(g, x)d\mu<0\ \text{ and }\ \int_M\lambda_{s+\cdots+c_\kappa+1}(g,x)d\mu>0.
\end{equation}
for some small neighborhood $\mathcal{V}'_c$ of $h$ and any $g\in\mathcal{V}'_c$.
On the other hand, by Lemma \ref{thm:BC}, there is an open set $\mathcal{V}\subset \mathcal{V}'_c$ such that for any $g\in \mathcal{V}$ and any $s< i\leq s+c$, $P_{i,g}$ is homoclinic related to $Q_{i,g}$.  Taking $g\in \mathcal{V}\cap \text{Diff}^2(M)$ , we shall prove $g$ is ergodic (and hence is stably ergodic) in the following.

Set
$$\Lambda^+=\{x\in \mathcal{O}: \lambda_{s+\cdots+c_\kappa+1}(x)>0\},\ \ \Lambda^-=\{x\in \mathcal{O}: \lambda_{s+\cdots+{c_{\kappa}}}(x)<0\}.$$
By (\ref{eq:2}), $\mu(\Lambda^+)\cdot \mu(\Lambda^-)>0$. Moreover, we have
$\mu(\Lambda^+\cup\Lambda^-)=1$ because of the domination
$E^{c_{\kappa}} \prec E^{c_{\kappa+1}}$. If we have showed that $\Lambda^+\subset \Lambda^u$ (mod $0$) and  $\Lambda^-\subset \Lambda^s$ (mod $0$), then $g$ is ergodic by Lemma \ref{thm:hhtu} and Remark \ref{rk:hhtu}.
We  only prove the first part and the proof of the second part is similar.

In fact, we  shall prove that for almost every point $x\in \Lambda^+$, there is $y\in orb(x)$ such that $y\in \Lambda^u$. Then, by the invariance of $\Lambda^u$, we have $x\in \Lambda^u$.

Since $g$ is accessible, by Lemma \ref{lem:3}, there is a $\mu$-full measure set $\mathcal{O}'$ such that $orb(x)$ is dense in $M$ for every $x\in \mathcal{O}'$. Recall that $g$ has a $cu$-blender of index $s+c-1$ associated to $(P_{s+c-1}, Q_{s+c})$. So, given $x\in \Lambda^+\cap  \mathcal{O}'$, there is $y\in orb(x)$ such that $y\in \text{Bl}^u(P_{s+c-1})$. By the Pesin's Stable Manifold Theorem,
$y$ has unstable manifold $W^u(y)$ of dimension $c_{\kappa+1}+\cdots+c_t+u$ and $W^u(y)$ is tangent to the bundle $E^{\kappa+1}\oplus\cdots\oplus E^u$. Moreover, $W^u(y)$ should intersect transversely  the stable manifold of $P_{s+c-1}$ (which has index $s+c-1$) since  the strong unstable manifold $\mathcal{W}^u(y)$ has uniform size and  $y\in \text{Bl}^u(P_{s+c-1})$. That is to say,  $W^u(y)\pitchfork W^s(P_{s+c-1})\neq \emptyset$.
Since $P_{s+c-1}$ is homoclinic related to $Q_{s+c-1}$, we have $W^u(y)\pitchfork W^s(Q_{s+c-1})\neq \emptyset$ by $\lambda$-Lemma.
\begin{claim}
$W^u(y)\pitchfork W^s(P_{s+c-2})\neq \emptyset$.
\end{claim}
\begin{proof}[Proof of Claim]
By the definition of blender, $W^u(Q_{s+c-1})\cap \text{Bl}^u(P_{s+c-2})$ contains a vertical disk $D$ through $\text{ Bl}^u(P_{s+c-2})$.
On the other hand, since  $W^u(y)\pitchfork W^s(Q_{s+c-1})\neq \emptyset$, by the $\lambda$-Lemma, $g^n(W^u(y))$  contains a $(u+1)$-dimension manifold closing to $W^u(Q_{s+c-1})$ for $n$ large enough. So
$g^n(W^u(y))$  contains a $(u+1)$-dimension disk which   cross  through $\text{Bl}^u(P_{s+c-2})$. Then we can conclude that $g^n(W^u(y))$  contains a $(u+2)$-strip which is well placed in  $\text{Bl}^u(P_{s+c-2})$ and thus
$W^u(y)\pitchfork W^s(P_{s+c-2})\neq \emptyset$ by the definition of blender.
\end{proof}

Similarly, by induction, we have $W^u(y)\pitchfork W^s(P_j)\neq \emptyset$, where $j=s+c_1+\cdots+c_\kappa$. That is to say, $y\in \Lambda^u$ and we complete the proof of the Lemma.
\end{proof}

We continue to prove Theorem A. By Lemma \ref{lem:2}, there is stably ergodic $g\in \mathcal{V}_c$.   Noting that $\mathcal{V}\cap \text{Diff}^2(M)\subset \mathcal{U}^2(f, \varepsilon)$, we complete the proof.
\end{proof}

\begin{thebibliography}{99}




\bibitem{ABCDW} Abdenur F., Bonatti C., Crovisier S., Diaz L., Wen L.: Periodic points and homoclinic classes, {\it Ergodic Theory Dynam. Systems}, {\bf 27}, 1-22(2007)


\bibitem{A} Anosov, D. V.: Geodesic
ows on closed Riemann manifolds with negative curvature. {\it
Proc. Steklov Inst. Math.}, No. {\bf 90}
(1967)


\bibitem{AS} Anosov, D. V., Ya. Sinai: Certain smooth ergodic systems, {\it Uspehi
Mat. Nauk}, {\bf 22} no. 5 (137), 107-172 (1967)




\bibitem{Ar} Arnaud M. C.: Le "closing lemma" en topologie $C^1$,  {\it M\'{e}moires Soc. Math. France, S\'{e}r 2,}
{\bf 74}, 1-120(1998)

\bibitem{Av} \'{A}vila A.: On the regularization of conservative maps,  {\it Acta Math., }
{\bf 205}, 5-18(2010)

\bibitem{AB} \'{A}vila A., Bochi  J.: Nonuniform hyperbolicity, gloabal dominated splittings and generic properties of volume-preserving diffeomorphisms,
arXiv:0912.2699v2.





\bibitem{BB} Baraviera A., Bonatti C.: Removing zero Lyapunov exponents, {\em Ergodic theory and dynamic systems}, {\bf 23}, 1655-1670(2003)

\bibitem{BV}  Bochi J.,   Viana M.: The Lyapunov exponents of generic volume preserving and symplectic systems, {\em  Ann. of Math. (2)},
 {\bf 161}(3),  1423-1485(2005)


\bibitem{BC} Bonatti C.,  Crovisier S.:  R\'{e}currence et g\'{e}n\'{e}ricit\'{e},
{\em Invent. Math.}, {\bf 158}, 33-104, (2004)

\bibitem{BD} Bonatti C., D\'\i az L.: Persistent Nonhyperbolic Transitive
Diffeomorphisms, {\em Ann. of Math.}, {\bf 143}, 357-396(1996)


\bibitem{BD2} Bonatti C.,   D\'\i az L.: Robust heterodimensional cycles
and $C^1$-generic dynamics, {\it J. Inst. Math. Jussieu}, {\bf 7}
469-525(2008)





\bibitem{BDV2}  Bonatti C.,  D\'\i az L.,  Viana M.: Dynamics beyond uniform hyperbolicity. Springer, (2005)

\bibitem{BMVW}  Bonatti C.,  Matheus C.,  Viana M.,  Wilkinson A.: Abundance of
stable ergodicity, {\it Comment. Math. Helv.}, {\bf 79},
753-757(2004)






\bibitem{BP}  Brin M., Pesin Ya.: Partial hyperbolic dynamical
systems, {\em Math. USSR Izvestija}, {\bf 8}, 177-218(1974)


\bibitem{BDP} Burns K., Dolgopyat D.,   Pesin Ya.:  Partial hyperbolicity, Lyapunov exponents and stable ergodicity,
{\em J. Stat. Phys.}, {\bf 108},927-942(2002)

\bibitem{BPSW}  Burns K.,  Pugh C.,  Shub M.,   Wilkinson A.: Recent results about
stable ergodicity. In {\em Smooth ergodic theory and its
applications (Seattle WA, 1999)}, volume 69 of {\em Procs. Symp.
Pure Math.},  Amer. Math. Soc.,  327-366(2001)





\bibitem{BW}  Burns K.,   Wilkinson A.: On the ergodicity of partially hyperbolic
systems, {\em  Ann. of Math.} {\bf 171}(1),  451-489(2010)




\bibitem{DW}  Dolgopyat D.,    Wilkinson A.: Stable ccessibility is $C^1$ dense,
{\em Ast\'{e}risque}, {\bf 287},33-60(2003)




\bibitem{GPS} Grayson, M., Pugh, C., and M. Shub, Stably ergodic diffeomorphisms.
{\em Ann. of Math. (2)},  {\bf 140}(2), 295-329(1994)

\bibitem{Ha} Hayashi S: Connecting invariant manifolds and the solution of the $C^1$ stablity and $\Omega$-stable conjecture for flows,
{\em Ann. of Math.}, {\bf 145}, 81-137(1997)



\bibitem{HPS}  Hirsch M.,  Pugh C.,   Shub M.: Invariant Manifolds,
Lecture Notes in Mathematics, {\bf 583}, Springer-Verlag, (1977)

\bibitem{HO} Hopf E.: Statistik der geodtischen Linien in Mannigfaltigkeiten negativer
Krmmung, {\it Ber. Verh. S\"{a}chs. Akad. Wiss. Leipzig,} {\bf 91},
261-304(1939)


\bibitem{LSY} Liang C., Sun W., Yang J.: Some results on perturbations to Lyapunov exponents. arXiv: 1011.5299


\bibitem{O} Oseledets V. I.: A multiplicative ergodic theorem: Lyapunov characteristic
numbers for dynamical systems. {\it Trans. Moscow Math. Soc.}, {\bf
19},   197-231(1968)



\bibitem{PS2}  Pugh C.,  Shub M.: Stable ergodicity and partial hyperbolicity, Ledrappier, F. (ed.) et
al., 1st International Conference on Dynamical Systems, Montevideo,
Uruguay, 1995 - a tribute to Ricardo Ma\~{n}\'{e}.
Proceedings. Harlow: Longman. Pitman Res. Notes Math. Ser., 362,
182-187(1996)

\bibitem{PS}  Pugh C.,    Shub M.: Stably ergodic dynamical systems and partial hyperbolicity,
{\em J. Complexty}, {\bf 13},125-179(1997)



\bibitem{HHU}  Rodr\'{\i}guez Hertz F.,  Rodr\'{\i}guez Hertz M. A.,   Ures R.: A survey
of partially hyperbolic dynamics, {\it Partially hyperbolic
dynamics, laminations, and Teichm\"{u}ller flow, Fields Inst.
Commun., Amer. Math. Soc.}, {\bf 51},  35-87(2007)

\bibitem{HHU2}  Rodr\'{\i}guez Hertz F.,  Rodr\'{\i}guez Hertz M. A.,  Ures R.:
 Accessibility and stable ergodicity for partially hyperbolic diffeomorphisms
with 1D-center bundle, {\it Invent. Math.}, {\bf 172}, 353-381(2008)

\bibitem{HHTU3} Rodr\'{\i}guez Hertz F., Rodr\'{\i}guez Hertz M. A., Tahzibi A., Ures R.:  A
criterion for ergodicity of non-uniformly hyperbolic
diffeomorphisms, {\em Elect. Res. Ann. AMS}, {\bf 14}, 74-81(2007)

\bibitem{HHTU2}  Rodr\'{\i}guez Hertz F.,  Rodr\'{\i}guez Hertz M. A., Tahzibi A.,  Ures R.: Creation of blenders in the conservative setting,
{\em Nonlinearity}, {\bf 23}, 211-223(2010)

\bibitem{HHTU}  Rodr\'{\i}guez Hertz F.,  Rodr\'{\i}guez Hertz M. A., Tahzibi A.,  Ures R.: New criteria for ergodicity and non-uniform hyperbolicity,
arXiv: 0907.4539







\bibitem{W2}  Wilkinson A.: Conservative partially hyperbolic
dynamics, {\it  ICM 2010 Proceedings.}


\bibitem{WX}  Wen L., Xia Z.: $C^1$ connecting lemmas,
{\em Trans. Amer. Math.}, {\bf 352}, 5213-5230(2000)




\end{thebibliography}
\end{document}